\definecolor{shadecolor}{gray}{0.875}
\numberwithin{equation}{section}
\theoremstyle{plain}
\newtheorem{prop}{Proposition}[section]
\newtheorem{theo}[prop]{Theorem}
\theoremstyle{definition}
\newtheorem{defi}[prop]{Definition}
\newtheorem{prob}[prop]{Problem}
\newtheorem{conj}[prop]{Conjecture}
\newtheorem{rema}[prop]{Remark}
\newtheorem{exam}[prop]{Example}
\def\bZ{{\mathbb Z}}
\def\Nef{\mathrm{Nef}}
 \author{S\lowercase{ho} T\lowercase{animoto}}
\address{Graduate School of Mathematics, Nagoya University, Furocho Chikusa-ku, Nagoya, 464-8602, Japan}
\email{sho.tanimoto@math.nagoya-u.ac.jp}
\title[Geometric Manin's conjecture]{A\lowercase{n} \lowercase{introduction to} G\lowercase{eometric} M\lowercase{anin's conjecture}}
\begin{document}
\date{\today}

\begin{abstract}
This is a survey paper on Geometric Manin's conjecture which was proposed by Brian Lehmann and the author. We introduce Geometric Manin's conjecture (GMC) and review some recent progress on this conjecture.
\end{abstract}

\maketitle

\tableofcontents

\section{Introduction}

In the late 1980s Yuri Manin and his collaborators (Victor Batyrev and Yuri Tschinkel) formulated a conjecture predicting an asymptotic formula for the counting function of rational points of bounded height on a smooth Fano variety defined over a number field (\cite{FMT89} and \cite{BM}), and this has been further revised over three decades by many mathematicians including but not limited to Emmanuel Peyre, Batyrev--Tschinkel, and Lehmann--Sengupta--Tanimoto in \cite{Peyre}, \cite{BT}, \cite{Peyre03}, \cite{Peyre17}, and \cite{LST18}. This conjecture has been formulated and summarized as {\it Manin's conjecture} or {\it Batyrev--Manin's conjecture}. 

Using the correspondence between rational points over number fields and rational curves over finite fields, one may formulate a version of Manin's conjecture over finite fields or more generally over global function fields. When Manin's conjecture was formulated, Batyrev discovered a heuristic argument implying Manin's conjecture over finite fields (\cite{Bat88}), and this was an inspiration of \cite{BM} which includes the formulation of the first version of Manin's conjecture over number fields. Batyrev's heuristic was based on the following three assumptions on the moduli spaces of rational curves on Fano varieties: 
\begin{enumerate}
\item the moduli spaces parametrizing rational curves outside of a proper closed exceptional set have expected dimension; 
\item for each nef algebraic class $\alpha$ of integral $1$-cycles, the moduli space of rational curves of class $\alpha$ is irreducible, and;
\item the number of rational points on the moduli space is approximated by the number of rational points on the affine space of the same dimension.
\end{enumerate}
Furthermore, Jordan Ellenberg and Akshay Venkatesh suggested to justify (3) by using homological stability of \'etale cohomology of the moduli spaces combined with Grothendieck-Lefschetz trace formula. (\cite{EV05}) The assumption (1) has been verified over complex numbers in \cite[Theorem 1.1]{LTGMC} using results on exceptional sets in Manin's conjecture (\cite{HJ16} and \cite{LTDuke}) which are based on recent breakthroughs in the minimal model program (\cite{BCHM} and \cite{birkar16b}). The assumption (2) is wrong in general due to the presence of thin exceptional sets which are geometrically characterized in \cite{LST18}. To overcome this situation, Brian Lehmann and the author proposed Geometric Manin's conjecture in \cite{LTGMC} as a replacement of the assumption (2), and this has been further revised in \cite{BLRT20} by Roya Beheshti, Brian Lehmann, Eric Riedl, and the author.

Roughly speaking Geometric Manin's conjecture predicts that for each nef algebraic class $\alpha$ of sufficiently large anticanonical degree, there is a unique irreducible component of the moduli space parametrizing rational curves of class $\alpha$ which should be counted in Manin's conjecture. 
This unique component is called as the {\it Manin component} in \cite{BLRT20}.
In this survey paper, we review the formulation of Geometric Manin's conjecture developed in \cite{LTGMC} and \cite{BLRT20} and discuss some recent results on this topic.

\

Here is the road map of this paper: in Section~\ref{sec:background} we review some basic definitions and results which are needed in the later sections. In Section~\ref{sec:ManinBatyrev}, we recall Manin's conjecture over finite fields and Batyrev's heuristic of this conjecture. In Section~\ref{sec:GMCchar0}, we introduce Geometric Manin's conjecture in characteristic $0$ and discuss some results on this topics. In Section~\ref{sec:GMCcharp}, we discuss subtlety of Geometric Manin's conjecture in positive characteristic.

\

\noindent
{\bf Notation:} Let $k$ be a field. A variety $X$ defined over $k$ is an integral separated scheme of finite type over $k$. For any extension $k'/k$, we denote the base change of $X$ to $k'$ by $X_{k'}$. We denote the base change of $X$ to $\overline{k}$ by $\overline{X}$. For a scheme $X$, a component of $X$ means an irreducible component unless stated otherwise. 

Let $X$ be a smooth projective variety defined over $k$. We denote the numerical equivalence of Cartier divisors and $1$-cycles by $\equiv$. We denote the space of Cartier divisors on $X$ up to numerical equivalence by $N^1(X)_\bZ$ and the space of integral $1$-cycles on $X$ up to numerical equivalence by $N_1(X)_\bZ$. We denote $N^1(X)_\bZ \otimes \mathbb R$ by $N^1(X)$ and $N_1(X)_\bZ \otimes \mathbb R$ by $N_1(X)$, and these are finite dimensional real vector spaces. We denote the pseudo-effective cones of divisors and curves by $\overline{\mathrm{Eff}}^1(X) \subset N^1(X)$ and $\overline{\mathrm{Eff}}_1(X) \subset N_1(X)$ respectively. We also denote the nef cones of divisors and curves by $\Nef^1(X) \subset N^1(X)$ and $\Nef_1(X) \subset N_1(X)$ respectively too.

Let $X$ be a projective variety defined over $k$ and $L$ be a big and nef $\mathbb Q$-Cartier divisor on $X$. An $L$-line ($L$-conic, or $L$-cubic) is a birational map $f : C \to X$ to the image such that $C$ is a smooth geometrically integral and geometrically rational curve and $\deg f^*L = 1$ ($=2$ or $3$ respectively).

\bigskip

\noindent
{\bf Acknowledgements:}
The author would like to thank Roya Beheshti, Brian Lehamann, and Eric Riedl for our collaborations shaping my understanding of geometry of moduli spaces of rational curves. In particular the author would like to thank Brian for multiple collaborations on this subject. The author would also like to thank Brian for comments on an early draft of this paper.
The author would also like to thank the organizers of Miyako-no-Seihoku Algebraic Geometry Symposium 2021 for an opportunity to give a talk there.

Sho Tanimoto was partially supported by JSPS KAKENHI Early-Career Scientists Grant number 19K14512, by JSPS Bilateral Joint Research Projects Grant number JPJSBP120219935.

\section{Backgrounds}
\label{sec:background}

Let $k$ be a perfect field. 
Here we recall basic definitions and results on the geometry of Manin's conjecture.
We first introduce a birational invariant which plays a central role in Manin's conjecture:

\begin{defi}
Let $X$ be a smooth projective variety defined over $k$ and $L$ be a big and nef $\mathbb Q$-Cartier divisor on $X$. We define {\it the Fujita invariant} or {\it the $a$-invariant} as
\[
a(X, L) := \min\{ t\in \mathbb R \, | \, t L + K_X \in \overline{\mathrm{Eff}}^1(X)\},
\]
where $K_X$ is the canonical divisor of $X$ and $\overline{\mathrm{Eff}}^1(X)$ is the pseudo-effective cone of divisors on $X$. When $L$ is nef but not big, we set $a(X, L) = \infty$. When $k$ has characteristic $0$ and $L$ is big, it follows from \cite{BDPP} that $a(X, L) > 0$ if and only if $X$ is geometrically uniruled. When $k$ has positive characteristic, it follows from \cite[Theorem 1.6]{Das20} that $a(X, L) > 0$ if and only if $\overline{X}$ admits a dominant family of rational curves $C$ such that $K_X.C <0$.

When $X$ is singular but admits a smooth resolution $\beta: \widetilde{X} \to X$, we define the Fujita invariant as the Fujita invariant of the pullback of $L$ to a smooth resolution, i.e.,
\[
a(X, L) := a(\widetilde{X}, \beta^*L).
\]
This is well-defined due to the birational invariance of this invariant. (\cite[Propostion 2.7]{HTT15})
\end{defi}

\begin{exam}
Let $X$ be a smooth weak Fano variety defined over $k$, i.e., $X$ is projective and $-K_X$ is big and nef. Then we have $a(X, -K_X) = 1$.
\end{exam}

\begin{exam}
Let $X$ be a smooth projective variety defined over $k$ and $L$ be a big and nef $\mathbb Q$-divisor on $X$. Let $C \subset X$ be a geometrically integral curve such that $L.C >0$. Then $a(C, L|_C) >0$ if and only if $\overline{C}$ is rational. When $C$ is geometrically rational, we have
\[
a(C, L|_C) = \frac{2}{L.C}.
\]
\end{exam}

One of basic properties of the Fujita invariants is the following property:

\begin{prop}[{\cite[Corollary 4.5]{LST18}}]
Let $X$ be a geometrically integral smooth projective variety defined over $k$ and $L$ be a big and nef $\mathbb Q$-divisor on $X$. Then for any extension $k'/k$ we have
\[
a(X_{k'}, L_{k'}) = a(X, L).
\]
\end{prop}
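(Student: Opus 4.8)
The plan is to deduce everything from the following base change invariance of the pseudo-effective cone: for \emph{every} $\mathbb Q$-Cartier divisor $D$ on $X$ and every field extension $k'/k$,
\[
D\in\overline{\mathrm{Eff}}^1(X)\quad\Longleftrightarrow\quad D_{k'}\in\overline{\mathrm{Eff}}^1(X_{k'}).
\]
Granting this, observe that $X_{k'}$ is again smooth projective and geometrically integral, that $K_{X_{k'}}=(K_X)_{k'}$ because the canonical class commutes with base change, and that $L_{k'}$ is again big and nef (nefness is preserved under base change, and bigness will follow from the argument below). Applying the displayed equivalence to $D=tL+K_X$ for each $t\in\mathbb R$ shows that the two subsets of $\mathbb R$ occurring in the definition of $a(X,L)$ and of $a(X_{k'},L_{k'})$ literally coincide, whence $a(X,L)=a(X_{k'},L_{k'})$.

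To prove the equivalence, fix an ample Cartier divisor $A$ on $X$; then $A_{k'}$ is ample on $X_{k'}$. On any projective variety $Y$ over a field one has: a $\mathbb Q$-divisor $P$ lies in $\overline{\mathrm{Eff}}^1(Y)$ if and only if $P+\epsilon A_Y$ is big for every $\epsilon\in\mathbb Q_{>0}$ — for ``$\Leftarrow$'' because $P=\lim_{\epsilon\to 0^+}(P+\epsilon A_Y)$ is then a limit of big, a fortiori pseudo-effective, classes, and for ``$\Rightarrow$'' because adding the interior point $\epsilon A_Y$ of $\overline{\mathrm{Eff}}^1(Y)$ to the pseudo-effective class $P$ lands in the interior of $\overline{\mathrm{Eff}}^1(Y)$, which is the big cone. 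Thus it suffices to show that a $\mathbb Q$-Cartier divisor $M$ on $X$ (we take $M=D+\epsilon A$) is big on $X$ if and only if $M_{k'}$ is big on $X_{k'}$. This in turn follows from flat base change for coherent cohomology: for every line bundle $\mathcal M$ on $X$ one has $H^0(X,\mathcal M)\otimes_k k'\cong H^0(X_{k'},\mathcal M_{k'})$, hence $\dim_k H^0(X,\mathcal M)=\dim_{k'}H^0(X_{k'},\mathcal M_{k'})$; since moreover $\dim X_{k'}=\dim X$, the volume $\vol(M)=\limsup_{m\to\infty}\dim H^0(\mathcal O(\lfloor mM\rfloor))/m^{\dim X}$ is the same whether computed on $X$ or on $X_{k'}$, and $M$ is big exactly when this number is positive. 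Combining the two facts gives
\begin{align*}
D\in\overline{\mathrm{Eff}}^1(X)
&\iff D+\epsilon A\ \text{is big on }X\ \text{for all }\epsilon\in\mathbb Q_{>0}\\
&\iff D_{k'}+\epsilon A_{k'}\ \text{is big on }X_{k'}\ \text{for all }\epsilon\in\mathbb Q_{>0}\\
&\iff D_{k'}\in\overline{\mathrm{Eff}}^1(X_{k'}),
\end{align*}
which is the desired equivalence.

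The one step that is not formal is the descent direction $D_{k'}\in\overline{\mathrm{Eff}}^1(X_{k'})\Rightarrow D\in\overline{\mathrm{Eff}}^1(X)$: since $N^1(X_{k'})$ may be strictly larger than the image of $N^1(X)$, and since $\overline{\mathrm{Eff}}^1$ is merely a closed cone rather than the zero locus of equations defined over $k$, there is no immediate descent; the argument above circumvents this by trading pseudo-effectivity of $D$ for bigness of the divisors $D+\epsilon A$, a property that is read off from global sections and is therefore insensitive to the ground field. For a \emph{finite} extension $k'/k$ there is an alternative, purely divisorial route worth recording: the projection $\pi\colon X_{k'}\to X$ is finite and flat of degree $[k':k]$, so $\pi_*$ carries effective divisors to effective divisors and is continuous on $N^1$, giving $\pi_*\overline{\mathrm{Eff}}^1(X_{k'})\subseteq\overline{\mathrm{Eff}}^1(X)$; applied to $D_{k'}=\pi^*D$ together with the projection formula $\pi_*\pi^*D=[k':k]\,D$, this yields the descent at once. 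The cohomological argument has the advantage of also covering transcendental extensions with no extra work.
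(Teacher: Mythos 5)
The survey itself gives no proof of this proposition---it is quoted directly from \cite{LST18}---so there is no internal argument to measure you against; judged on its own, your proof is correct and self-contained. The essential point, descent of pseudo-effectivity along an arbitrary (possibly transcendental) extension where $N^1(X_{k'})$ may be strictly larger than $N^1(X)$, is exactly the non-formal step, and your device of trading pseudo-effectivity of $D$ for bigness of $D+\epsilon A$ and then testing bigness through the volume, which by flat base change ($H^0(X,\mathcal M)\otimes_k k'\cong H^0(X_{k'},\mathcal M_{k'})$ and $\dim X_{k'}=\dim X$) is insensitive to the ground field, handles it cleanly; geometric integrality is what guarantees $X_{k'}$ is again a variety so that all these notions make sense, and $K_{X_{k'}}=(K_X)_{k'}$, $L_{k'}$ big and nef are as you say. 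The usual argument in the literature (and the one behind the cited result) instead descends pseudo-effectivity by writing classes as limits of effective divisors defined over finite subextensions and pushing forward along the finite flat projection---a global version of the alternative you record at the end---so your cohomological route is a genuine variant whose advantage is that transcendental extensions require no reduction step. Two small points to tidy up: (i) the definition of $a(X,L)$ ranges over real $t$, while your equivalence is proved for $\mathbb Q$-divisors; this is harmless because $\{t\in\mathbb R \mid tL+K_X\in\overline{\mathrm{Eff}}^1(X)\}$ is a closed half-line (closedness of the cone, plus $L$ pseudo-effective), hence determined by its rational points, but you should say so. (ii) You use that the big cone is the interior of $\overline{\mathrm{Eff}}^1$ and that pseudo-effective plus ample is big over the given base field, not just over $\bar k$; both are standard for projective varieties over any field, but a sentence acknowledging this would make the argument airtight.
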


One of main theorems regarding the $a$-invariants is the following theorem which claims that the exceptional set for the $a$-invariant in Manin's conjecture is a proper closed subset in characteristic $0$:

\begin{theo}[{\cite[Theorem 1.1]{HJ16} and \cite[Theorem 3.3]{LTGMC}}]
\label{theo:HJ}
Let $k$ be a field of characteristic $0$ and $X$ be a smooth geometrically uniruled projective variety defined over $k$. Let $L$ be a big and nef $\mathbb Q$-divisor on $X$. Then let $Y$ run over all subvarieties $Y$ defined over $k$ such that $a(Y, L|_Y) > a(X, L)$, the union
\[
V = \bigcup_Y Y \subset X
\]
is a proper closed subset of $X$.

\end{theo}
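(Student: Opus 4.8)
The plan is to bound the $a$-invariant uniformly over all subvarieties $Y$ and then use a Noetherian/constructibility argument to extract a single closed subset. First I would reduce to the case where $Y$ is positive-dimensional: since $X$ is geometrically uniruled, $a(X,L) > 0$ by \cite{BDPP}, so no zero-dimensional $Y$ can satisfy $a(Y, L|_Y) > a(X,L) \geq 0$ with the convention on constant subvarieties. Next, by the birational invariance of the $a$-invariant (\cite[Proposition 2.7]{HTT15}) I may pass to a resolution of each $Y$, and by the base change invariance proven just above (Corollary 4.5 of \cite{LST18}) it suffices to work over $\overline{k}$ and then descend: the union $V$ is automatically defined over $k$ because the condition $a(Y, L|_Y) > a(X,L)$ is Galois-stable, so if $\overline{V}$ is closed over $\overline{k}$ then $V$ is closed over $k$.

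The heart of the matter is the characteristic-$0$ input, which is exactly \cite[Theorem 1.1]{HJ16}: that result (via the minimal model program breakthroughs \cite{BCHM}, \cite{birkar16b}) shows that the subvarieties $Y$ with $a(Y, L|_Y) > a(X,L)$ are swept out by a bounded family, hence their union is contained in a proper closed subset. So I would invoke \cite{HJ16} to obtain a quasi-projective scheme $T$ and a family $\mathcal{Y} \to T$ whose fibers are precisely (resolutions of) the $Y$'s in question, with the property that the image of $\mathcal{Y}$ in $X$ is not dense. The role of \cite[Theorem 3.3]{LTGMC} is presumably to handle a subtlety that \cite{HJ16} left open — likely the passage from divisorial $a$-invariants to the case of arbitrary big and nef $L$ (rather than $L = -K_X$ or ample $L$), or the verification that the exceptional set is genuinely closed rather than merely a countable union of closed sets. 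I would cite that refinement to close exactly this gap.

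Assembling the pieces: let $V_1, V_2, \dots$ be the closed subsets produced by the boundedness statement (finitely many, since the family $T$ has finitely many components), set $V = \bigcup_i V_i$, which is closed and proper in $\overline{X}$ by construction, and observe every $Y$ with $a(Y,L|_Y) > a(X,L)$ lies in some component of some $V_i$. Finally descend to $k$ as above.

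The main obstacle I anticipate is \emph{not} the boundedness itself — that is imported wholesale from \cite{HJ16} — but rather the bookkeeping needed to ensure that the $a$-invariant behaves well under the resolutions and under restriction to subvarieties that may themselves be singular or non-normal, and that one does not accidentally need the "$\geq$" version (which would only give a countable union). The delicate point, and the reason \cite[Theorem 3.3]{LTGMC} is needed alongside \cite{HJ16}, is upgrading from "the $Y$'s form a bounded family, so their union is not Zariski dense" to "their union is literally a single proper closed subset," i.e., controlling that a bounded family of subvarieties has closed image in $X$, which requires the family to be proper (or at least that one takes Zariski closures compatibly). I would spell this out carefully but expect no new ideas beyond what is already in the cited references.
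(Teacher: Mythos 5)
Your proposal takes essentially the same route as the paper, which offers no independent proof of this theorem but simply attributes it to \cite{HJ16} and \cite[Theorem 3.3]{LTGMC} (noting only that it rests on Birkar's solution of the BAB conjecture), exactly the two inputs you invoke and glue together with routine descent and Noetherian bookkeeping. Of your two hedged guesses for the role of \cite[Theorem 3.3]{LTGMC}, the correct one is the second: \cite{HJ16} yields the boundedness/non-density statement, and the Lehmann--Tanimoto refinement is what upgrades this to the assertion that $V$ is genuinely a proper \emph{closed} subset.
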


The proof of this theorem is relied on the recent solution for the boundedness of singular Fano varieties (BAB conjecture) by Birkar in \cite{birkar16b}.

\begin{exam}[{\cite[Lemma 4.1]{HTT15}}]
Let $S$ be a smooth del Pezzo surface defined over an algebraically closed field $k$ of characteristic $0$ with $L = -K_S$.
We have $a(S, L) = 1$ and for any rational curve $C$, we have $a(C, L|_C) > 1$ if and only if $-K_S.C = 1$. Hence $V$ in Theorem~\ref{theo:HJ} is the union of $-K_S$-lines.
\end{exam}

\begin{exam}[{\cite[Theorem 1.2]{BLRT20}}]
Let $k$ be an algebraically closed field of characteristic $0$ and let $X$ be a smooth Fano threefold with $L = -K_X$. Then it follows from \cite[Theorem 1.2]{BLRT20} that $V$ in Theorem~\ref{theo:HJ} is the union of $-K_X$-lines and exceptional divisors of divisorial contractions admitted by $X$.
\end{exam}

\

Next we introduce another invariant which also plays a central role in Manin's conjecture:

\begin{defi}
Let $X$ be a smooth projective variety defined over $k$ and $L$ be a big and nef $\mathbb Q$-Cartier divisor on $X$. We assume that $a(X, L) >0$. Then we define {\it the face associated to $L$} to be
\[
\mathcal F(k, X, L):= \Nef_1(X)\cap \{\alpha \in N_1(X) \, | \, (a(X, L)L + K_X).\alpha = 0\}.
\]
Note that since $(a(X, L)L + K_X).\alpha \geq 0$ for any $\alpha \in \Nef_1(X)$, this face is a usual face in the sense of convex geometry. We define {\it the $b$-invariant} as
\[
b(k, X, L) = \dim \mathcal F(k, X, L).
\]

When $X$ is singular, but admits a smooth resolution $\beta : \widetilde{X} \to X$, we define the $b$-invariant as the $b$-invariant of the pullback to a smooth resolution:
\[
b(k, X, L) = b(k, \widetilde{X}, \beta^*L).
\]
Again this is well-defined due to the birational invariance of this invariant. (\cite[Proposition 2.10]{HTT15})
\end{defi}

\begin{rema}
In the literature in Manin's conjecture, the $b$-invariant is defined as codimension of the minimal supported face of $\overline{\mathrm{Eff}}^1(X)$ containing $a(X, L)L +K_X$. These two definitions are equivalent due to the duality between $\Nef_1(X)$ and $\overline{\mathrm{Eff}}^1(X)$. See \cite[Definition 4.11]{LST18} for more details.
\end{rema}
\begin{rema}
The $a$-invariants and the $b$-invariants also play roles in the study of cylinders on Fano varieties. See, e.g., \cite[p. 11]{CPPZ20}.
\end{rema}

\begin{exam}
Let $X$ be a smooth weak Fano variety defined over $k$ with $L = -K_X$. Then $a(X, L)L + K_X \equiv 0$ so that we have $\mathcal F(k, X, L) = \Nef_1(X)$. Hence we have
\[
b(k, X, L) : = \dim N_1(X) = \dim N^1(X) =: \rho(X).
\]
\end{exam}

The following notion plays a role in the formulation of Geometric Manin's conjecture:

\begin{defi}
Let $X$ be a smooth projective variety defined over $k$ and $L$ be a big and nef $\mathbb Q$-divisor on $X$. Assume that $a(X, L) >0$. 

A dominant generically finite morphism $f: Y \to X$ from a smooth projective variety is called as {\it an $a$-cover} if $a(Y, f^*L) = a(X, L)$ holds.

Let $f : Y \to X$ be an $a$-cover. Then the pushforward map defines a homomorphism
\[
f_* : \mathcal F(k, Y, f^*L) \to \mathcal F(k, X, L).
\]
We say $f$ is {\it face contracting} if this homomorphism is not injective.
\end{defi}

\begin{rema}
When $b(k, Y, f^*L) > b(k, X, L)$, $f$ is automatically face contracting.
\end{rema}

\section{Manin's conjecture over finite fields and Batyrev's heuristic}
\label{sec:ManinBatyrev}

Let $k$ be a finite field and $X$ be a smooth weak Fano variety defined over $k$. Manin's conjecture over finite fields concerns an asymptotic formula for the counting function of $k$-rational curves on $X$. To this end we need to introduce the notion of moduli spaces of rational curves on $X$.

\subsection{Moduli spaces of rational curves}

Let $k$ be a field and $X$ be a smooth projective variety defined over $k$. One of candidates for the space of rational curves is the morphism scheme
\[
\mathrm{Mor}(\mathbb P^1, X),
\]
whose geometric closed points parametrize morphisms from $\mathbb P^1$ to $\overline{X}$. See \cite{Kollar} for a rigorous introduction to this scheme and \cite{Debarre} for a more friendly introduction. 
This scheme consists of at most countably many irreducible components. 

For each effective integral $1$-cycle $\alpha$,
\[
\mathrm{Mor}(\mathbb P^1, X, \alpha),
\]
denotes the morphism scheme whose geometric closed points are parametrizing $f : \mathbb P^1 \to \overline{X}$ such that $f_*\mathbb P^1 \equiv \alpha$. This is a quasi-projective scheme over $k$ and it consists of finitely many components. Let $U$ be a non-empty Zariski open subset of $X$. Then
\[
\mathrm{Mor}_U(\mathbb P^1, X, \alpha),
\]
denotes the open subscheme of $\mathrm{Mor}(\mathbb P^1, X, \alpha)$ parametrizing $f$ such that $f(\mathbb P^1)\cap U \neq \emptyset$.

For $f : \mathbb P^1 \to X$, the Zariski tangent space of $\mathrm{Mor}(\mathbb P^1, X)$ at $f$ is given by $H^0(\mathbb P^1, f^*T_X)$ and the obstruction space is given by $H^1(\mathbb P^1, f^*T_X)$ where $T_X$ is the tangent bundle of $X$. In particular, for any component $M \subset \mathrm{Mor}(\mathbb P^1, X, \alpha)$ we have
\[
\dim M \geq h^0(\mathbb P^1, f^*T_X) - h^1(\mathbb P^1, f^*T_X) = -K_X.\alpha + \dim X.
\]
Moreover, when $h^1(\mathbb P^1, f^*T_X) = 0$, the dimension of $\mathrm{Mor}(\mathbb P^1, X, \alpha)$ at $f$ is given by $-K_X. \alpha + \dim X$ and $\mathrm{Mor}(\mathbb P^1, X, \alpha)$ is smooth at $f$. We call this lower bound as {\it the expected dimension of $M$}.

\

Let $f : \mathbb P^1 \to X$ be a morphism. Then we have the decomposition
\[
f^*T_X = \mathcal O(a_1) \oplus \cdots \oplus \mathcal O(a_n)
\]
where $n = \dim X$ and $a_1 \geq \cdots \geq a_n$. We say $f$ is free if $a_n \geq 0$ and $f$ is very free if $a_n \geq 1$.

When $f$ is free, there exists a unique component $M \subset \mathrm{Mor}(\mathbb P^1, X)$ containing $f$. If we denote the universal family over $M$ by $\pi : \mathcal U \to M$, then the evaluation map $\mathrm{ev} : \mathcal U \to X$ is dominant and separable. Conversely if we have a component $M \subset \mathrm{Mor}(\mathbb P^1, X)$ such that for the universal family $\pi : \mathcal U \to M$, its evaluation map $\mathrm{ev} : \mathcal U \to X$ is dominant and separable, then a general member $f \in M$ is free.
In this situation we call $M$ as {\it a dominant and separable component}.

When $M$ is a component, we consider the following product of two evaluation maps
\[
\mathrm{ev}_2 : \mathcal U \times_M \mathcal U \to X \times X.
\]
Then $\mathrm{ev}_2$ is dominant and separable if and only if a general member of $M$ is very free.

\subsection{Batyrev's heuristic}

Let $k$ be a finite field with $\# k = q = p^n$.
Let $X$ be a smooth weak Fano variety defined over $k$.
Manin's conjecture over $k$ concerns an asymptotic formula for the counting function of rational curves of bounded degree on $X$. What does it mean?
Let $U$ be a Zariski open subset of $X$. We consider the following counting function:
\[
N(U, -K_X, d) = \sum_{\alpha \in \mathrm{Eff}_1(X)\cap N_1(X)_\bZ,\ -K_X.\alpha \leq d} \# \mathrm{Mor}_U(\mathbb P^1, X, \alpha)(k).
\]
This is the number of $k$-rational curves of anticanonical degree $\leq d$ meeting $U$ on $X$.
In his lecture notes (\cite{Bat88}), Batyrev described his heuristic argument to obtain an asymptotic formula of this counting function as $d \to \infty$. His heuristic was based on the following three assumptions:
\begin{enumerate}
\item There exists a Zariski open subset $U$ such that any component of $\mathrm{Mor}_U(\mathbb P^1, X, \alpha)$ is a dominant and separable component;
\item for each nef integral class of $1$-cycles $\alpha$, $\mathrm{Mor}_U(\mathbb P^1, X, \alpha)$ is irreducible;
\item and $\# \mathrm{Mor}_U(\mathbb P^1, X, \alpha)(k)$ is approximated by $q^{\dim \mathrm{Mor}_U(\mathbb P^1, X, \alpha)}$.
\end{enumerate}
Furthermore Ellenberg and Venkatesh suggested to justify (3) by using homological stability of $H^i_{c}(\mathrm{Mor}_U(\mathbb P^1, X, \alpha), \mathbb Q_\ell)$ combined with Grothedieck-Lefschetz trace formula.

The assumption (1) implies that for an effective integral class $\alpha$ which is not nef, $\mathrm{Mor}_U(\mathbb P^1, X, \alpha)$ is empty. For each nef class $\alpha$, the dimension of $\mathrm{Mor}_U(\mathbb P^1, X, \alpha)$ is given by the expected dimension $-K_X.\alpha + \dim X$. 
Thus the above counting function is roughly approximated by
\begin{align*}
N(U, -K_X, d) &= \sum_{\alpha \in \mathrm{Eff}_1(X)\cap N_1(X)_\bZ,\ -K_X.\alpha \leq d} \# \mathrm{Mor}_U(\mathbb P^1, X, \alpha)(k)\\
&= \sum_{\alpha \in \Nef_1(X)\cap N_1(X)_\bZ,\ -K_X.\alpha \leq d} \# \mathrm{Mor}_U(\mathbb P^1, X, \alpha)(k)\\
& \sim \sum_{\alpha \in \Nef_1(X)\cap N_1(X)_\bZ,\ -K_X.\alpha \leq d} q^{\dim \mathrm{Mor}_U(\mathbb P^1, X, \alpha)}\\
& = \sum_{\alpha \in \Nef_1(X)\cap N_1(X)_\bZ,\ -K_X.\alpha \leq d} q^{-K_X.\alpha + \dim X}\\
& \sim Cq^d d^{\rho(X)-1}, \quad \text{as $d \to \infty$ for some $C >0$},
\end{align*}
where the last asymptotic formula can be justified by a standard lattice point counting arguments.
The assumption (1) is true in characteristic $0$ in general, but not in characteristic $p$.
The assumption (2) fails even in characteristic $0$, and this failure is related to the existence of thin exceptional sets in Manin's conjecture over number fields. Geometric Manin's conjecture will serve as a replacement of the assumption (2) in characteristic $0$. Unfortunately the validity of the assumption (3) is not well-studied. There are some results on Cohen-Johns-Segal's conjecture on stability of homotopy type of the space of holomorphic spheres on homogeneous spaces (see \cite{CJS98}) and toric varieties (see \cite{Guest}) as well as a recent result on Fano hypersurfaces in \cite{BS20} inspired by the circle method which is a technique from analytic number theory.

\section{Geometric Manin's conjecture in characteristic $0$}
\label{sec:GMCchar0}

In this section, we assume that $k$ is an algebraically closed field of characteristic $0$. We discuss the validity of assumptions (1) and (2) in Batyrev's heuristic.

\subsection{Expected dimension}

First of all, the validity of the assumption (1) in Batyrev's heuristic is true in characteristic $0$. Here is the main theorem proved by Lehmann and the author:

\begin{theo}[{\cite[Theorem 1.1]{LTGMC}}]
\label{theo:expecteddimension}
Let $X$ be a smooth weak Fano variety defined over $k$ with $L = -K_X$. Let $V$ be the union of subvarieties $Y$ such that $a(Y, L|_Y) > a(X, L) = 1$. This is a proper closed subset by Theorem~\ref{theo:HJ}. Let $U$ be the complement of $V$. Then for any effective integral $1$-cycle $\alpha$, any component $M$ of $\mathrm{Mor}_U(\mathbb P^1, X, \alpha)$ is a dominant component. In particular $M$ generically parametrizes a free rational curve and we have $\dim M = -K_X.\alpha + \dim X$.
\end{theo}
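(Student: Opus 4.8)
The plan is to argue by contradiction on the image of $M$ in $X$, with the boundedness statement Theorem~\ref{theo:HJ} as the only non-elementary input. Let $\pi : \mathcal U \to M$ be the universal family, $\mathrm{ev} : \mathcal U \to X$ the evaluation map, and $Y := \overline{\mathrm{ev}(\mathcal U)}$. Since every curve parametrized by $M$ meets $U$, we have $Y \cap U \neq \emptyset$, so $Y \not\subset V$. If $Y = X$, then $M$ is a dominant component; as $\mathrm{char}\, k = 0$, this component is automatically separable, so by the criterion recalled in Section~\ref{sec:ManinBatyrev} a general member of $M$ is free and therefore $\dim M = -K_X.\alpha + \dim X$, which is exactly the assertion. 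Hence the whole content is to rule out $Y \subsetneq X$, and I would do this by establishing $a(Y, L|_Y) > a(X, L) = 1$: by Theorem~\ref{theo:HJ} this forces $Y \subset V$, contradicting $Y \cap U \neq \emptyset$.

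To get a handle on $a(Y, L|_Y)$ I would squeeze $\dim M$ between two bounds. The lower bound is the deformation-theoretic one recalled in Section~\ref{sec:ManinBatyrev}: $\dim M \geq -K_X.\alpha + \dim X = L.\alpha + \dim X$, using $L = -K_X$. For the upper bound, pick a resolution $\beta : \widetilde Y \to Y$. Taking strict transforms of the general members of $M$ produces a constructible irreducible subset of $\mathrm{Mor}(\mathbb P^1, \widetilde Y)$ of dimension $\geq \dim M$, contained in a component $W$ that dominates $\widetilde Y$; since $\mathrm{char}\, k = 0$, $W$ is separable, its general member is a free curve of some class $\widetilde\alpha$ with $\beta^* L.\widetilde\alpha = L.\alpha$, and $\dim W = -K_{\widetilde Y}.\widetilde\alpha + \dim \widetilde Y$; hence $\dim M \leq -K_{\widetilde Y}.\widetilde\alpha + \dim \widetilde Y$. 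Now $\widetilde\alpha$ is represented by a member of a family of rational curves covering $\widetilde Y$, so $\widetilde\alpha.D \geq 0$ for every effective divisor class $D$, hence for every class in $\overline{\mathrm{Eff}}^1(\widetilde Y)$; applying this to $D = a(Y, L|_Y)\,\beta^* L + K_{\widetilde Y}$ gives $-K_{\widetilde Y}.\widetilde\alpha \leq a(Y, L|_Y)\,(L.\alpha)$. (If $\beta^* L$ fails to be big on $\widetilde Y$ then $a(Y, L|_Y) = \infty$ and there is nothing to prove.)

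Combining the two bounds, $L.\alpha + \dim X \leq \dim M \leq a(Y, L|_Y)\,(L.\alpha) + \dim Y$ with $\dim Y < \dim X$; so either $L.\alpha > 0$, in which case $a(Y, L|_Y) > 1$, or $L.\alpha = 0$, in which case the inequality reads $\dim X \leq \dim Y$ and is already impossible. In the first case we again contradict Theorem~\ref{theo:HJ}, finishing the argument. I expect the one genuinely delicate point to be the resolution step: checking that a family of rational curves dominating a possibly badly singular $Y$ lifts to a family of rational curves of the same $L$-degree on $\widetilde Y$ without dropping dimension, and that the lifted family lies in an honestly dominant (hence separable) component. This is a standard but slightly fussy manipulation in the deformation theory of rational curves---handling curves that meet $\mathrm{Sing}(Y)$ and possible multiple covers---after which the proof reduces to the numerical computation above, whose only ingredient beyond Theorem~\ref{theo:HJ} is the elementary fact that the class of a curve moving in a covering family pairs non-negatively with pseudo-effective divisors.
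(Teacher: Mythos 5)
Your proposal is correct and follows essentially the same route as the actual proof of \cite[Theorem 1.1]{LTGMC} (which this survey quotes without reproducing): the deformation-theoretic lower bound $\dim M \geq -K_X.\alpha + \dim X$ is played against the upper bound $\dim M \leq a(Y, L|_Y)(L.\alpha) + \dim Y$ for a family dominating a proper subvariety $Y$, the latter obtained exactly as you do by lifting to a resolution, using that in characteristic $0$ a dominant component generically parametrizes free curves of the expected dimension, and pairing the class of a covering family against the pseudo-effective class $a(Y,L|_Y)\beta^*L + K_{\widetilde Y}$, whence $a(Y,L|_Y)>1$ and $Y\subset V$ contradicts $Y\cap U\neq\emptyset$ via Theorem~\ref{theo:HJ}. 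The lifting step you flag as delicate is indeed the standard lemma used in that proof, so nothing essential is missing.
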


Next we will discuss the assumption (2) and its replacement.

\subsection{Geometric Manin's conjecture}
The assumption (2) in Batyrev's heuristic fails in general. This is due to the existence of thin exceptional sets in Manin's conjecture documented in \cite{BT-cubic}, \cite{LeRu19}, and \cite{BHB20}.
Here is an example from \cite{LeRu19}:

\begin{exam}[{\cite{LeRu19}}]
Let $S = \mathbb P^1\times \mathbb P^1$ and $X = \mathrm{Hilb}^{[2]}(S)$ be the Hilbert scheme of length $2$ subschemes on $X$. Then $X$ is a smooth weak Fano variety. Let $L = -K_X$.
Then we have
\[
a(X, L) = 1, \quad b(X, L) = \dim N^1(X) = 3.
\]
Let $W$ be the blow up along the diagonal on $S \times S$.
Then we have a natural morphism $f : W \to X$. One can prove that
\[
a(W, f^*L) = 1, \quad b(W, f^*L) = 4.
\]
If we consider the pushforward map
\[
f_* : \mathcal F(W, f^*L)\cap N_1(W)_\bZ \to \mathcal F(X, L)\cap N_1(X)_\bZ.
\]
Then one can show that the image spans a rank $2$ lattice. Let $\alpha$ be an integral nef class on $X$ which is in the image of this map.
Then the size of $(f_*)^{-1}(\alpha)$ grows quadratically as the anticanonical degree of $\alpha$ grows.
For each $\beta \in  (f_*)^{-1}(\alpha)$, $\mathrm{Mor}(\mathbb P^1, W, \beta)$ is irreducible and a natural morphism
\[
\mathrm{Mor}(\mathbb P^1, W, \beta) \to \mathrm{Mor}(\mathbb P^1, X, \alpha)
\]
is a generically finite map to a component $M$ of $\mathrm{Mor}(\mathbb P^1, X, \alpha)$. 
Then since $\mathrm{Mor}(\mathbb P^1, W, \beta)$ and any component of $\mathrm{Mor}(\mathbb P^1, X, \alpha)$ are dominant components, we have
\[
\dim \mathrm{Mor}(\mathbb P^1, W, \beta) = -K_{W}.\beta + \dim W = -K_X.\alpha + \dim X = \dim M,
\]
because $\beta$ has vanishing intersection against $-f^*K_X + K_W$.
Thus $\mathrm{Mor}(\mathbb P^1, W, \beta)$ maps to $M$ dominantly.
Moreover a natural morphism
\[
\bigsqcup_{\beta \in (f_*)^{-1}(\alpha)}\mathrm{Mor}(\mathbb P^1, W, \beta) \to \mathrm{Mor}(\mathbb P^1, X, \alpha)
\]
is a degree $2$ map so that we conclude that $\mathrm{Mor}(\mathbb P^1, X, \alpha)$ admits many components as the anticanonical degree of $\alpha$ grows.
\end{exam}

Due to the existence of such examples, we proposed the following definitions in \cite{LTGMC} and \cite{BLRT20}:

\begin{defi}
Let $X$ be a smooth weak Fano variety defined over $k$ with $L = -K_X$.
Let $f : Y \to X$ be a generically finite morphism to the image from a smooth projective variety.
We say that $f$ is {\it a breaking map} if one of the following properties holds:
\begin{itemize}
\item $f.: Y \to X$ satisfies $a(Y, f^*L) > a(X, L)$;
\item $f$ is an $a$-cover and satisfies $\kappa(a(Y, f^*L)f^*L + K_Y) >0$ where $\kappa(D)$ is the Iitaka dimension of a $\mathbb Q$-divisor $D$, or;
\item $f$ is an $a$-cover
and $f$ is face contracting.
\end{itemize}

We say a component $M \subset \mathrm{Mor}(\mathbb P^1, X)$ is {\it an accumulating component} if there exists a breaking map $f : Y \to X$ and a component $N$ of $\mathrm{Mor}(\mathbb P^1, Y)$ such that $f$ induces a generically finite and dominant rational map $N \dashrightarrow M$. We say $M$ is {\it a Manin component} if $M$ is not an accumulating component.
\end{defi}

\begin{rema}
The above notion of Manin components is slightly different from the one in \cite{BLRT20}. It is closer to the notion of {\it good components} in \cite{BLRT20}.
\end{rema}

Here is one of formulations of Geometric Manin's conjecture:
\begin{conj}[Geometric Manin's conjecture ({\cite{LTGMC} and \cite{BLRT20}})]
Let $X$ be a smooth weak Fano variety defined over $k$ with $L = -K_X$. Then there exist $\tau \in \Nef_1(X)\cap N_1(X)_\bZ$ and a positive integer $n$ such that for any integral nef class $\alpha \in \tau + \Nef_1(X)$, $\mathrm{Mor}(\mathbb P^1, X, \alpha)$ contains exactly $n$ Manin components.
\end{conj}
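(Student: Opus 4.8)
The plan is to promote Theorem~\ref{theo:expecteddimension} from a statement about individual components to a statement about the \emph{set} of components: equip that set with a module structure over the monoid of nef classes via gluing of free rational curves, and then run an induction on the anticanonical degree powered by a Movable Bend-and-Break principle. Throughout I would fix $U = X \setminus V$ as in Theorem~\ref{theo:expecteddimension}, so that for every integral nef $\alpha$ each component of $\mathrm{Mor}_U(\mathbb P^1, X, \alpha)$ is dominant, of the expected dimension $-K_X.\alpha + \dim X$, and generically parametrizes a free curve. Since $X$ is Fano the cone $\Nef_1(X)$ is rational polyhedral (Cone Theorem), so by Gordan's lemma the monoid $S := \Nef_1(X)\cap N_1(X)_\bZ$ is finitely generated; fix generators $g_1,\dots,g_r$. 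Then, given components $M_1, M_2$ over $U$ of classes $\alpha_1,\alpha_2$, I would take general free curves $f_i \in M_i$ through a common general point of $U$, glue them there, and smooth the resulting nodal curve; freeness guarantees the smoothing exists, stays in $U$, and sweeps out a well-defined component $M_1 \star M_2$ of $\mathrm{Mor}_U(\mathbb P^1, X, \alpha_1+\alpha_2)$. The routine part is checking that $\star$ is well-defined, associative and commutative; the payoff is that sending $\alpha$ to the set of components of $\mathrm{Mor}_U(\mathbb P^1,X,\alpha)$ becomes an $S$-module.

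\emph{Step 2 (Movable Bend-and-Break and stabilization).} The heart of the argument is a Movable Bend-and-Break statement: for $\alpha$ deep in the interior of $S$, a general free curve in any component of $\mathrm{Mor}_U(\mathbb P^1, X, \alpha)$ degenerates, within that component, to a connected union of free curves one piece of which carries one of the classes $g_i$. Combined with the more elementary fact that the gluing map is generically finite of degree one onto its image component, this says that for $\alpha$ sufficiently positive and each $g_i$ represented by a free curve meeting $U$, gluing with a fixed component of class $g_i$ is a \emph{bijection} from the components of $\mathrm{Mor}_U(\mathbb P^1, X, \alpha)$ to those of $\mathrm{Mor}_U(\mathbb P^1, X, \alpha+g_i)$. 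Hence the number of components of $\mathrm{Mor}_U(\mathbb P^1,X,\alpha)$ is locally constant on some translate $\tau_0 + S$, and since that shifted monoid is connected under these moves the number equals a constant $n$.

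\emph{Step 3 (matching Manin components).} It remains to reconcile ``component over $U$'' with ``Manin component''. A component of $\mathrm{Mor}(\mathbb P^1,X,\alpha)$ meeting $U$ is dominant of expected dimension, hence cannot be dominated by a breaking map that raises the $a$-invariant; using the classification of breaking maps together with the boundedness of Fano varieties (the input behind Theorem~\ref{theo:HJ}) one checks that, after enlarging $V$ once and for all, these are precisely the Manin components. Conversely, a component contained in $X \setminus U$, or one produced by an $a$-cover that is face contracting or has $\kappa(a(Y,f^*L)f^*L + K_Y) > 0$, contributes only classes lying in a proper sub-monoid or a proper sublattice of $S$ — exactly the mechanism of the $\mathrm{Hilb}^{[2]}(\mathbb P^1\times\mathbb P^1)$ example — so such components disappear after translating by a sufficiently positive $\tau$. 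Choosing $\tau$ to dominate $\tau_0$ and those finitely many sub-monoids, every integral nef $\alpha \in \tau + \Nef_1(X)$ then has exactly $n$ Manin components.

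\emph{Main obstacle.} Step 2 is the genuine difficulty, and the reason this is still a conjecture: proving Movable Bend-and-Break in general — that a high-degree movable curve always degenerates within its own component into lower-degree movable pieces, with enough control on the non-free locus that no component is lost and none are spuriously identified — is known only in special cases (del Pezzo surfaces, low-dimensional Fanos, homogeneous spaces, certain Fano hypersurfaces). The ancillary ingredients (finite generation of $S$, deformation theory of gluing free curves, classification of breaking maps via boundedness) are available, but a general bend-and-break statement is not, and obtaining one would presumably require new understanding of the boundary geometry of the spaces $\mathrm{Mor}(\mathbb P^1, X, \alpha)$.
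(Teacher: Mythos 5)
The statement you are asked about is a conjecture: the paper offers no proof of it, and none exists. Your text is accordingly not a proof but a strategy sketch, and you are right that its core, Step 2, is precisely the open part --- the paper itself isolates it as a separate ``Movable Bend and Break conjecture'' in arbitrary dimension, with Theorem~\ref{theo:MBB} known only for Fano threefolds. But even granting MBB, your Step 2 is thinner than what the actual inductive proofs (for threefolds, del Pezzo surfaces, etc.) require: MBB only produces a degeneration into \emph{two} free pieces of unspecified classes, not a piece of a chosen generator class $g_i$, and the claim that gluing with a fixed low-degree component induces a \emph{bijection} on sets of components is exactly where one needs control of the fibers of the evaluation map (the role of Theorem~\ref{theo:a-covers}, whose proof rests on a classification of $a$-covers available only in low dimension) together with a separate analysis of the base cases (conics, cubics). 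So the gap is not one missing black box but the entire inductive machinery.

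There is also a genuine error in Step 3. You identify ``component meeting $U$, dominant of expected dimension'' with ``Manin component,'' and propose to discard the rest by choosing $\tau$ large. The paper's $\mathrm{Hilb}^{[2]}(\mathbb P^1\times\mathbb P^1)$ example shows this fails: the extra components $M \subset \mathrm{Mor}(\mathbb P^1, X, \alpha)$ coming from the face-contracting $a$-cover $f : W \to X$ are themselves dominant, of expected dimension $-K_X.\alpha + \dim X$, and their curves sweep out $X$, hence meet any open set $U$; Theorem~\ref{theo:expecteddimension} cannot see them. Moreover, the classes they occupy form (in that example) a rank~$2$ sublattice of $N_1(X)_\bZ$, and translating the nef cone by any $\tau$ does not make $\tau + \Nef_1(X)$ disjoint from a proper sublattice --- those classes persist at arbitrarily high degree, and the number of accumulating components above each such class even grows. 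The conjecture copes with this not by avoiding such classes but by \emph{defining} those components to be accumulating (via breaking maps) and counting only the Manin components that remain; any proof must therefore show directly that, for each large nef $\alpha$, the non-accumulating components number exactly $n$, rather than showing that accumulating components eventually stop occurring.
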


In the next subsection, we will discuss what is known about this conjecture.

\subsection{Examples of Geometric Manin's conjecture}

The irreducibility of moduli spaces of rational curves (GMC) is known in the following cases:
\begin{itemize}
\item Fano hypersurfaces (\cite{HRS04}, \cite{BK13}, \cite{RY19}, and \cite{BV17});
\item homogeneous spaces (\cite{Thom98} and \cite{KP01});
\item toric varieties (\cite{Bou12} and \cite{Bou16});
\item del Pezzo surfaces (\cite{Testa05}, \cite{Testa09}, and \cite{BLRT21});
\item moduli spaces of vector bundles (\cite{Cast04} and \cite{MB20})
\item smooth Fano threefolds (\cite{Starr00}, \cite{Cast04}, \cite{LTGMC}, \cite{LTJAG}, \cite{BLRT20}, and \cite{ST21});
\item del Pezzo fibrations (\cite{LT19} and \cite{LTGT}).
\end{itemize}

Many of the above results are based on the method pioneered in \cite{HRS04} which used two main tools in algebraic geometry, i.e., the moduli space of stable maps of genus $0$ with $n$ marked points $\overline{M}_{0,n}(X)$ and Mori's Bend and Break lemma. (See \cite{FP97} for an introduction to the moduli spaces of stable maps and \cite{KM98} for Bend and Break lemmas.) 

An idea is that one can prove the irreducibility of moduli spaces of rational curves of degree $d$ by using the induction on $d$ combined with Bend and Break lemma in the moduli space of stable maps.
This strategy has been sharpened in \cite{BLRT20} for smooth Fano threefolds, and here are main results from this paper: 

The first result is called as {\it Movable Bend and Break} which is an essential ingredient for the inductive steps of various inductive proofs of GMC, and this is an improvement of Mori's Bend and Break lemma:

\begin{theo}[Movable Bend and Break ({\cite[Theorem 1.4]{BLRT20}})]
\label{theo:MBB}
Let $X$ be a smooth Fano threefold and $M$ be a component of $\overline{M}_{0,0}(X)$ generically parametrizing free rational curves. Suppose that a general member $C$ of $M$ has anticanonical degree $\geq 6$. Then $M$ contains a stable map of the form $f : Z \to X$ such that $Z$ consists of two components and the restriction of $f$ to each component realizes this curve as a free rational curve. 
\end{theo}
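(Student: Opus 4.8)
The plan is to argue by induction on the anticanonical degree $d = -K_X.C$, starting from $d = 6$, using the moduli space of stable maps $\overline{M}_{0,0}(X)$ and Mori's Bend and Break inside it. The key geometric input is that a general free rational curve $C$ of anticanonical degree $d \geq 6$ on a Fano threefold $X$ moves in a family of dimension $-K_X.C + \dim X - 3 = d$ (after quotienting by $\Aut(\mathbb P^1)$), so imposing $d$ general point conditions still leaves a positive-dimensional family. First I would take a general $C \in M$ and choose $d-1$ general points $x_1,\dots,x_{d-1}$ on $X$; the sublocus of $M$ parametrizing curves through these points is then at least $1$-dimensional. Compactifying this family in $\overline{M}_{0,0}(X)$ and applying Bend and Break, I obtain a stable map in the closure whose domain is reducible, say with components mapping to curves $C_1$ and $C_2$ of anticanonical degrees $d_1 + d_2 = d$ (here the pointedness is a bookkeeping device to force the break; one can instead work with a general one-parameter family through enough points as in \cite{BLRT20}).

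The heart of the argument — and the main obstacle — is to show that the break can be arranged so that \emph{both} $C_1$ and $C_2$ are free, rather than one being a non-free curve (e.g. a line lying in the exceptional locus, or a multiple cover). This is exactly where the hypothesis $d \geq 6$ and the threefold classification enter. The strategy is: if a break always produces a non-free component, that non-free component must lie in the closed set $V$ of Theorem~\ref{theo:expecteddimension} (curves with $a$-invariant exceeding $1$), which for Fano threefolds is by \cite[Theorem 1.2]{BLRT20} a union of $-K_X$-lines and exceptional divisors of divisorial contractions. One then shows that the remaining free component must itself have degree small enough that, combined with a dimension count on how non-free curves can attach to free curves, the total degree cannot reach $6$ unless a genuinely movable break (both components free) is available. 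Concretely, I would stratify by the degree of the non-free part and check that attaching non-free rational curves to a free curve of anticanonical degree $d' \leq d-1$ cannot produce, in the boundary of a component generically parametrizing free curves, a configuration forcing $d \leq 5$; the numerical threshold $6$ is the sharp bound where this obstruction disappears, and verifying sharpness requires the explicit geometry of the extremal contractions on $X$.

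Once a break into two \emph{free} components $C_1, C_2$ of degrees $d_1, d_2 \geq 1$ with $d_1 + d_2 = d$ is established for a general member of $M$, I would need to confirm that this reducible stable map actually lies in $M$ (the original component) and not merely in some other component of $\overline{M}_{0,0}(X)$: this follows because it arises as a limit of the chosen one-parameter family inside $M$, so it lies in $\overline{M}$, and by Theorem~\ref{theo:expecteddimension} and a smoothing/gluing argument (smoothing nodes of a chain of free curves yields a free curve of the expected dimension, cf.\ \cite{Kollar}) the general deformation of this reducible map is again in $M$. Finally I would record that by construction each component of $Z$ maps to a free rational curve on $X$, completing the induction step and hence the theorem. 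The routine parts I would not grind through are the standard dimension count for families of free curves on a smooth threefold, the precise statement of Bend and Break in $\overline{M}_{0,0}(X)$, and the smoothing-of-chains lemma; the genuinely hard part is the case analysis ruling out non-free breaks for $d \geq 6$.
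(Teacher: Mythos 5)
This survey states Theorem~\ref{theo:MBB} as a citation of \cite[Theorem 1.4]{BLRT20} and contains no proof, so your proposal has to be measured against the argument in \cite{BLRT20}. At the level of strategy you have the right template (degenerate via Bend and Break inside $\overline{M}_{0,0}(X)$, then control the non-free degenerations using the $a$-invariant exceptional locus), but your numerical setup is wrong: on a threefold a general point of $X$ imposes codimension $\dim X - 1 = 2$ on the $d$-dimensional family $M$, so the locus of curves through $d-1$ general points has expected dimension $d - 2(d-1) < 0$ and is empty for $d \geq 3$, not ``at least $1$-dimensional.'' The correct setup fixes \emph{two} general points, leaving a family of dimension $d-4 \geq 2$ when $d \geq 6$; this is where the threshold $6$ actually enters, rather than emerging at the end of the stratified count you sketch. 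Fixing two general points also buys something your plan never uses: since there are only finitely many families of rational curves of anticanonical degree $\leq d$ that are not dominant, general points avoid the locus they sweep out, so the components of the broken curve passing through the two fixed points can be forced to be free.

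The more serious issue is that the heart of the theorem is exactly the step you explicitly defer. After Bend and Break the limit stable map may be a chain with many components, may contain multiple covers, and may contain non-free components lying on $-K_X$-lines, on exceptional divisors of divisorial contractions, or in fibers of conic bundles and del Pezzo fibrations; the assertion that ``a dimension count on how non-free curves can attach to free curves'' rules this out for $d \geq 6$ is a statement of the difficulty, not an argument, and there is no reason the obstruction localizes in the closed set $V$ of Theorem~\ref{theo:expecteddimension} in the clean way you suggest (a non-free component of a limit need not have $a$-invariant larger than $1$ --- multiple covers and curves with slightly unbalanced restricted tangent bundle already cause trouble). In \cite{BLRT20} this is where essentially all the work lies: one must absorb or trade away the bad pieces by gluing free subcurves to the components through the general points and smoothing, using the classification of $a$-covers and of the exceptional locus for Fano threefolds (the inputs quoted in this survey as Theorem~\ref{theo:a-covers} and the example following Theorem~\ref{theo:HJ}), with a genuine case analysis over the types of contractions and fibrations admitted by $X$. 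Your final step (that the reducible map lies in $M$, since $M$ is a closed component of the proper space $\overline{M}_{0,0}(X)$ and the map is a limit of a family in $M$) is fine, but as it stands the proposal is an outline whose decisive content --- producing a break with \emph{both} pieces free --- is missing.
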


The next result is the irreducibility of a general fiber of the evaluation map and the result is based on the classification of $a$-covers for smooth Fano threefolds (\cite[Lemma 5.2, Theorem 5.3, and Theorem 5.4]{BLRT20}):

\begin{theo}[{\cite[Theorem 1.3]{BLRT20}}]
\label{theo:a-covers}
Let $X$ be a smooth Fano threefold and $M$ be a component of $\overline{M}_{0,0}(X)$ generically parametrizing free rational curves. Let $\mathcal C \to M$ be the corresponding component of $\overline{M}_{0,1}(X)$ above $M$. Suppose that a general fiber of the evaluation map $\mathrm{ev} : \mathcal C \to X$ is not irreducible. Then either
\begin{itemize}
\item $M$ generically parametrizes stable maps whose images are $-K_X$-conics, or;
\item $M$ parametrizes a family of curves contracted by a del Pezzo fibration $\pi : X \to Z$.
\end{itemize}
\end{theo}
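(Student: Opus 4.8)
\noindent
\emph{Proof strategy.} The plan is to manufacture, from the failure of irreducibility of a general fibre of $\mathrm{ev}$, an $a$-cover $f:Y\to X$ of degree $\ge 2$, and then to feed it into the classification of $a$-covers of smooth Fano threefolds recorded in \cite[Lemma 5.2, Theorem 5.3, and Theorem 5.4]{BLRT20}. Concretely, after replacing $\mathcal C$ by a resolution of its reduction I would take the Stein factorization of the (projective, dominant) evaluation map, $\mathrm{ev}:\mathcal C\xrightarrow{g}\widetilde X\xrightarrow{f}X$ with $g_*\mathcal O_{\mathcal C}=\mathcal O_{\widetilde X}$ and $f$ finite. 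Since $g$ has geometrically connected fibres and we are in characteristic $0$, its general fibre is geometrically integral, so a general fibre of $\mathrm{ev}$ is a disjoint union of $\deg f$ integral subvarieties; thus the hypothesis that this fibre is reducible says exactly that $\deg f\ge 2$. Normalizing and resolving $\widetilde X$ (and, if necessary, blowing up $\mathcal C$ further) produces a smooth projective variety, which I still call $Y$, together with a dominant generically finite morphism $f:Y\to X$ of degree $\ge 2$; the goal is to show $f$ is an $a$-cover for $L=-K_X$.

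To do this I would lift the family $M$ to $Y$. A general member $C$ of $M$ is a free, hence embedded, rational curve of class $\alpha$, and the fibre of $\mathcal C\to M$ over $[C]$ is a copy of $C$; composing its inclusion into $\mathcal C$ with $g$ yields a rational curve $\widetilde C\subset Y$ with $f|_{\widetilde C}$ birational onto $C$ (a degree count, using that $C$ is parametrized birationally), so $f_*\widetilde C=\alpha$. Letting $[C]$ vary gives a dominant, generically injective map $M\dashrightarrow\widetilde M$ onto a subvariety $\widetilde M\subset\overline{M}_{0,0}(Y)$, whence $\dim\widetilde M=\dim M=-K_X.\alpha$. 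The induced rational map $\mathcal C\dashrightarrow\overline{M}_{0,1}(Y)$ has image dominating $Y$ because $g$ is dominant, so by separability the general member of $\widetilde M$ is free and lies in a component $N$ of $\overline{M}_{0,0}(Y)$ with $\dim N=-K_Y.\widetilde C$ (a threefold). Writing $K_Y=f^*K_X+R$ with $R\ge 0$ the ramification divisor gives $-K_Y.\widetilde C=-K_X.\alpha-R.\widetilde C$, and $\dim\widetilde M\le\dim N$ together with $R$ effective and a general $\widetilde C$ not contained in its support forces $R.\widetilde C=0$. Now $a(Y,f^*L)\le a(X,L)=1$ always, since $a(X,L)f^*L+K_Y=f^*(a(X,L)L+K_X)+R=R\ge 0$; and if $a(Y,f^*L)<1$ then $a(Y,f^*L)f^*L+K_Y$ would be pseudo-effective while meeting the movable curve $\widetilde C$ in $(a(Y,f^*L)-1)(-K_X.\alpha)+R.\widetilde C=(a(Y,f^*L)-1)(-K_X.\alpha)<0$, contradicting \cite{BDPP}. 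Hence $a(Y,f^*L)=1=a(X,L)$, i.e.\ $f$ is an $a$-cover.

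With $f:Y\to X$ an $a$-cover of a smooth Fano threefold of degree $\ge 2$ in hand, I would then invoke \cite[Lemma 5.2, Theorem 5.3, and Theorem 5.4]{BLRT20}: up to birational modification over $X$, such an $f$ is either the double cover attached to the family of $-K_X$-conics, or it is compatible with a del Pezzo fibration $\pi:X\to Z$. In the first case a general curve $C=f(\widetilde C)$ in $M$ is a $-K_X$-conic; in the second case a general $\widetilde C$, hence $C$, is contracted by $\pi$. Unwinding through the birational identification $M\dashrightarrow\widetilde M$ and the fact that $\mathcal C$ is the universal curve over $M$ then gives exactly the two alternatives in the statement.

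The step I expect to be the main obstacle is the last one: the classification in \cite{BLRT20} is the genuinely heavy input, and matching each of its conclusions with the two geometric descriptions here — in particular recognizing the conic double cover and checking that a del Pezzo fibration on $Y$ descends to one on $X$ through $f$ — requires a careful study of the covering families. Within the argument itself, the delicate point is the chain $\dim M=\dim\widetilde M\le\dim N=-K_Y.\widetilde C$, since it is the resulting vanishing $R.\widetilde C=0$ that makes the duality of \cite{BDPP} applicable and upgrades $f$ to an $a$-cover.
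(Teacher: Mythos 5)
The survey itself contains no proof of this theorem --- it only cites \cite[Theorem 1.3]{BLRT20} and notes that the result ``is based on the classification of $a$-covers'' in \cite[Lemma 5.2, Theorem 5.3, and Theorem 5.4]{BLRT20} --- and your proposal follows exactly that route: Stein factorization of the evaluation map, a lifting-plus-dimension-count argument (the standard one from \cite{LTGMC}/\cite{BLRT20}) showing the finite part is an $a$-cover with $a(Y,f^*L)=1$, and then the cited classification. Your $a$-cover construction is correct in outline, with only harmless slips (a free curve on a threefold need not be embedded, and birationality of $f|_{\widetilde C}$ onto $C$ is neither known at that stage nor needed, since $f_*\widetilde C=\alpha$ already holds at the level of stable maps), while the genuinely heavy step --- unwinding the classification to get the two alternatives, including for components parametrizing non-birational stable maps --- is, as you acknowledge, exactly the content of the cited results of \cite{BLRT20}.
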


These theorems were used in the proofs of Geometric Manin's conjecture for the following Fano threefolds:
\begin{itemize}
\item del Pezzo threefolds of degree $\geq 2$ (\cite{Starr00}, \cite{Cast04}, and \cite[Section 7]{LTGMC});
\item most prime Fano threefolds of index $1$ and Picard rank $1$ (\cite{LTJAG}); 
\item arbitrary smooth quartic threefolds and the unique Fano threefolds with two $E_5$-type divisorial contractions (\cite[Section 8]{BLRT20}), and;
\item general del Pezzo threefolds of degree $1$(\cite{ST21}). 
\end{itemize}
Most proofs of the above results are based on the inductive proofs using Movable Bend and Break.
The inductive steps can be justified by using Theorems~\ref{theo:MBB} and \ref{theo:a-covers}, and the remaining tasks are the analysis of the base cases, i.e., the irreducibility of the moduli spaces of $-K_X$-conics, cubics, quartics, and so on.

There is a complete classification of smooth Fano threefolds by Iskovskih and Mori--Mukai (\cite{IskovI}, \cite{IskovII}, \cite{MM81}, and \cite{MM03}), so it is natural to propose the following problem:

\begin{prob}
Prove Geometric Manin's conjecture for all smooth Fano threefolds following the classification of Iskovskih and Mori--Mukai.
\end{prob}

We believe that Theorems~\ref{theo:MBB} and \ref{theo:a-covers} would be useful to attack this problem.
We also expect that Movable Bend and Break holds in higher dimension too:

\begin{conj}[Movable Bend and Break conjecture]
Let $X$ be a smooth Fano variety of dimension $n$ defined over $k$. Then there exists a positive integer $c(n)$ such that if $M$ is a component of $\overline{M}_{0,0}(X)$ generically parametrizing free rational curves of anticanonical degree $\geq c(n)$, then $M$ contains a stable map of the form $f : Z \to X$ such that $Z$ consists of two components and the restriction of $f$ to each component realizes this curve as a free rational curve. 
\end{conj}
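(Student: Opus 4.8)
The plan is to imitate the proof of the three-dimensional case (Theorem~\ref{theo:MBB}), isolating where $\dim X = 3$ is used so that the threshold can be extracted uniformly in $n$, and to induct on $n=\dim X$ to handle the degenerate cases. Fix the component $M$, let $\alpha\in N_1(X)_\bZ$ be the class it parametrizes, and assume $-K_X\cdot\alpha\ge c(n)$ for a constant to be determined; since a general $C\in M$ is free, $\dim M=-K_X\cdot\alpha+\dim X-3$.

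First suppose a general $C\in M$ is very free, equivalently the two-pointed evaluation $\mathrm{ev}_2$ on the universal family over $M$ dominates $X\times X$. Then for general $x,y\in X$ the locus of curves of $M$ through $x$ and $y$ has dimension $-K_X\cdot\alpha-\dim X-1$, which is $\ge 1$ once $c(n)\ge\dim X+2$. Choose a general irreducible curve $B\subset\overline{M}_{0,2}(X,\alpha)$ whose generic point lies over $M$ and which $\mathrm{ev}_2$ contracts to $(x,y)$. Properness of the Kontsevich space forces $B$ to acquire a limit point parametrizing a stable map $f\colon Z\to X$ with reducible domain; since $x,y$ are general this limit is, after contracting tails away from the two marked points, a chain $Z=Z_0\cup\cdots\cup Z_r$ of $\mathbb P^1$'s carrying the marked points on its end components, with no contracted component, so each $Z_i$ maps non-constantly with some class $\alpha_i$ and $\sum_i\alpha_i=\alpha$.

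Next one must force $r=1$ with both links free. This is where the degree bound enters: by Theorem~\ref{theo:expecteddimension} applied to the weak Fano $X$, every family of non-free rational curves is swept out by the proper closed exceptional set $V=\bigcup_jV_j$, and since $a(V_j,-K_X|_{V_j})>1$ the curves of bounded anticanonical degree inside each $V_j$ move in strictly smaller dimension than the free expectation. Since $B$ is general its limit point is general in the boundary stratum it lies on, so one is reduced to showing that chains with a non-free link fill up a substratum of codimension $\ge 2$ in $\partial M$; the above dimension deficit does this once $-K_X\cdot\alpha$ is large, and a chain of $\ge 3$ free links can then be partially smoothed within $M$ down to a two-component chain of free curves. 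To make $c(n)$ depend only on $n$, one invokes the boundedness of smooth Fano $n$-folds (\cite{birkar16b}) to bound uniformly the $a$-invariants of the $V_j$, their dimensions, and the minimal anticanonical degree of a very free curve. If instead a general $C\in M$ is not very free, the curves of $M$ should be contracted by a non-birational fibration $\pi\colon X\dashrightarrow Z$, with $C$ very free in a general fibre $F$ of dimension $<n$; applying the inductive hypothesis to $F$ with the constant $c(n-1)$ and pushing the resulting degeneration back to $X$ yields the conclusion with $c(n)=\max\{c(n-1),\,\text{threshold from the very free case}\}$.

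The main obstacle is exactly this last reduction: one needs a classification in dimension $n$ playing the role of Theorem~\ref{theo:a-covers}, asserting that a component on which $\mathrm{ev}_2$ fails to dominate, or on which every degeneration acquires a non-free link, comes only from $-K_X$-conics or from the fibres of a Mori fibre space --- equivalently is governed by a face-contracting $a$-cover --- and that away from this list the dimension estimates close up with a threshold uniform in $n$. The three-fold argument relies on the explicit classification of Fano threefolds and of their $a$-covers, which has no ready higher-dimensional substitute; producing one, or finding an alternative such as restricting to a general free rational surface through $C$ and invoking Bend and Break for surfaces (which presupposes one can always exhibit such a surface of controlled type), is the crux of the problem.
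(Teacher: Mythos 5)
The statement you are addressing is stated in the paper as an open conjecture (Movable Bend and Break in dimension $n$); the paper offers no proof of it, only the three-dimensional case Theorem~\ref{theo:MBB}, whose proof in \cite{BLRT20} is precisely what your sketch tries to generalize. So there is no proof in the paper to match, and your text is not a proof either: you yourself flag the missing higher-dimensional analogue of Theorem~\ref{theo:a-covers} as ``the crux,'' and that is indeed where the argument breaks.

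Concretely, the gaps are these. (i) The step ``every family of non-free rational curves is swept out by $V$'' is not what Theorem~\ref{theo:expecteddimension} says: it asserts that components of $\mathrm{Mor}_U(\mathbb P^1, X, \alpha)$ are dominant and generically free, which controls \emph{non-dominant} components (they lie in $V$), but a non-free link of a degenerate stable map can perfectly well be a special member of a dominant component and meet $U$; hence your codimension estimate for chains with a non-free link does not follow, and no value of $c(n)$ extracted this way is justified. (ii) The passage from a chain of $\ge 3$ free links to a two-component free chain ``within $M$'' is exactly the delicate point of Movable Bend and Break: gluing and smoothing free curves produces a free curve in \emph{some} component of $\overline{M}_{0,0}(X)$, and keeping the degeneration inside the fixed component $M$ is what the threefold proof achieves through the classification of $a$-covers and of conic/del Pezzo fibration structures; you assume it. (iii) In the non-very-free case, the claim that the curves of $M$ are contracted by a fibration $\pi\colon X \dashrightarrow Z$ with $C$ very free in a lower-dimensional Fano fiber is an unproved structural statement in dimension $n$ (it is a theorem only for threefolds, via Theorem~\ref{theo:a-covers}), so the induction $c(n) = \max\{c(n-1), \dots\}$ has no base to stand on; moreover even granting the fibration, one must check that the fiber is smooth Fano, that anticanonical degrees and freeness transfer from $F$ to $X$, and that the resulting two-component degeneration lies in $M$ rather than in another component of the same class. (iv) Finally, invoking \cite{birkar16b} to bound $a$-invariants and minimal very free degrees uniformly in $n$ is only a heuristic; no such uniform bound yielding an explicit $c(n)$ is established. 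In short, your dimension count through two general points and the properness degeneration are the standard (and correct) Bend and Break input, but everything that makes the broken curve \emph{movable} --- both links free and the degeneration internal to $M$ --- is missing, which is exactly why the statement remains a conjecture in the paper.
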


The above conjecture combined with \cite[Theorem 1.4]{LTGMC} will imply Batyrev's conjecture predicting a polynomial growth of the number of dominant components. See \cite[Section 5]{LTGMC} for more details.

\section{Geometric Manin's conjecture in characteristic $p$}
\label{sec:GMCcharp}

Let $k$ be an algebraically closed field of characteristic $p >0$. Here we discuss Geometric Manin's conjecture over $k$ and its subtlety in characteristic $p$. At this moment we do not fully understand how one should formulate Geometric Manin's conjecture in characteristic $p$ and Manin's conjecture over finite fields in general.

Before we start our main discussion, let us mention a few facts about the moduli space of stable maps of genus $0$. Let $X$ be a smooth projective variety defined over $k$. Let $\alpha$ be an integral effective $1$-cycle on $X$ and we denote the coarse moduli space of stable maps of genus $0$ and class $\alpha$ by
\[
\overline{M}_{0,0}(X, \alpha).
\]
Let $M$ be a component of this moduli space. Then one has
\[
\dim M \geq -K_X.\alpha + \dim X -3.
\]
We call this lower bound as the expected dimension of $M$. Note that this expected dimension is different from the expected dimension of $\mathrm{Mor}(\mathbb P^1, X, \alpha)$. This is due to the presence of the action of $\mathrm{PGL}_2$ on $\mathrm{Mor}(\mathbb P^1, X, \alpha)$. When $M$ generically parameterizes a birational stable map to a free rational curve, the dimension of $M$ is equal to the expected dimension and this stable map is a smooth point of the moduli space.

We denote the union of components of $\overline{M}_{0,0}(X)$ generically parametrizing stable maps from $\mathbb P^1$ by $\overline{\mathrm{Rat}}(X)$. We also define $\overline{\mathrm{Rat}}(X, \alpha)$ for an effective integral $1$-cycle $\alpha$ in a similar way.

\subsection{Weak Manin's conjecture}

Let us start our main discussion by stating weak Manin's conjecture over number fields:

\begin{conj}[Weak Manin's conjecture]
\label{conj:weakManin}
Let $F$ be a number field and $X$ be a smooth geometrically uniruled projective variety defined over $F$. Let $L$ be a big and nef $\mathbb Q$-divisor on $X$.
We consider a height function associated to $L$:
\[
\mathsf H_L : X(F) \to \mathbb R_{\geq 0}.
\]
Then there exists a non-empty Zariski open subset $U \subset X$ such that 
if we define the counting function as
\[
N(U, L, T): = \# \{ P \in U(F) \, | \, \mathsf H_L(P) \leq T \},
\]
then for any $\epsilon > 0$, we have
\[
N(U, L, T) = O(T^{a(X, L) + \epsilon}).
\]
\end{conj}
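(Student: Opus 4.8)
The plan is to induct on $\dim X$, producing the Zariski open set $U$ from Theorem~\ref{theo:HJ} and carrying out the inductive step by a fibration argument. First I would set $V \subsetneq X$ to be the proper closed subset of Theorem~\ref{theo:HJ}, i.e.\ the union of all subvarieties $Y$ defined over $F$ with $a(Y, L|_Y) > a(X, L)$; since $X$ is geometrically uniruled and the characteristic is $0$, we have $a(X, L) > 0$ by \cite{BDPP}, so Theorem~\ref{theo:HJ} does give a proper closed $V$. The reason to excise $V$ is that every subvariety $Y \not\subset V$ then satisfies $a(Y, L|_Y) \le a(X, L)$, which is exactly what prevents the exponent from growing when one restricts the count to $Y$, or to the fibers of a fibration; moreover $a$ is invariant under finite extension of the ground field by \cite[Corollary 4.5]{LST18}, which is needed to handle fibers over non-rational points of a base. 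In the course of the induction $U$ gets shrunk a further finite number of times, which is harmless.

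The base case $\dim X = 1$ is classical: if $\overline X$ has genus $\ge 2$ one takes $U$ to avoid the finitely many $F$-rational points (Faltings), so that $N(U, L, T) = 0$; if the genus is $1$ then $a(X, L) = 0$ and $N(X, L, T) = O(T^\epsilon)$ by the Mordell--Weil theorem; and if $\overline X \cong \bP^1$ then $a(X, L) = 2/(L.X)$ and the elementary count gives $N(X, L, T) = O(T^{2/(L.X)})$. For the inductive step with $\dim X \ge 2$, one would like to exploit a non-trivial fibration $\pi : X' \to Z$ --- obtained after replacing $X$ by a birational model and resolving, e.g.\ the maximal rationally connected fibration when $X$ is uniruled but not rationally connected --- and bound $N(U, L, T)$ by a sum, over those points $z \in Z$ supporting a fiber with an $F$-point of bounded height, of the fiberwise counts $N(U \cap X'_z, L|_{X'_z}, T)$. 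One then has to verify three things: that a general fiber again satisfies the hypotheses with $a(X'_z, L|_{X'_z}) \le a(X, L)$ --- which follows because a general fiber is not contained in $V$, together with standard properties of Fujita invariants under restriction and the field-extension invariance above; that the inductive hypothesis applies to the (rationally connected) general fibers, while the points on the non-uniruled base $Z$ are controlled by a Lang-type sparsity bound $N(Z, \cdot, T) = O(T^\epsilon)$, a theorem when $Z$ is a curve; and that the number of contributing $z$ is small enough that summing the fiberwise bounds $O(T^{a(X, L)+\epsilon})$ still returns $O(T^{a(X, L)+\epsilon})$, which needs a uniform-in-families version of the estimate.

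The genuinely hard case, and the main obstacle, is a rationally connected $X$ of dimension $\ge 2$: there is then no fibration onto a non-uniruled base to feed the induction, and one is forced to count rational points of bounded $L$-height directly. The general-purpose techniques --- the circle method, which requires $X$ to be a complete intersection whose dimension is large compared with its degree, and the determinant / dimension-growth method, which produces the exponent $\dim X$ rather than $a(X, L)$ and in fact fails for low-degree varieties such as $\bP^n$ --- do not deliver the exponent $a(X, L) + \epsilon$ in the needed generality. So this strategy does not prove the conjecture; it reduces it to (i) a uniform ``weak Manin in families'' estimate over a base, (ii) a Lang-type sparsity bound for non-uniruled varieties, and (iii) weak Manin's conjecture for rationally connected varieties --- still open even for smooth cubic surfaces. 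In the cases where (iii) is available --- toric varieties, flag varieties, equivariant compactifications of algebraic groups, and other varieties admitting an explicit lattice-point or adelic description of $U(F)$ --- the induction above does go through and gives the stated bound with $U = X \setminus V$.
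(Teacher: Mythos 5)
The statement you were asked about is not a theorem of the paper at all: it is Weak Manin's Conjecture, stated in the survey purely as a conjecture (Conjecture~\ref{conj:weakManin}), with no proof given or claimed. Its role in the paper is to set up a contrast with positive characteristic, where the function-field analogue is shown to \emph{fail} via the quasi-elliptic and inseparable-cover examples; the only supporting results in the text, Theorem~\ref{theo:HJ} and Proposition~\ref{prop:ramification}, are explicitly described as being ``consistent with'' the conjecture, not as establishing it. So there is no proof in the paper to compare yours against, and any complete proof you produced would have to be treated with suspicion, since the conjecture is open --- indeed open already for smooth cubic surfaces, as you note.

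Your write-up is, to its credit, honest about this: it is a reduction sketch, not a proof, and the gap is exactly where you place it. The induction hinges on three inputs that are themselves open in general: a uniform ``weak Manin in families'' bound for the fibers of the auxiliary fibration (uniformity in $z$ is essential when you sum over $O(T^{\epsilon})$ base points, and no such uniform statement is known beyond special classes); a Lang-type sparsity bound $N(Z,\cdot,T)=O(T^{\epsilon})$ for a non-uniruled base $Z$, which is a theorem only for curves (Faltings, Mordell--Weil) and is of Bombieri--Lang type in higher dimension; and, most fundamentally, the rationally connected case, where the MRC fibration degenerates and no general method (circle method, dimension growth) yields the exponent $a(X,L)+\epsilon$. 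The preparatory steps you do carry out --- excising the closed set $V$ of Theorem~\ref{theo:HJ}, using $a(Y,L|_Y)\le a(X,L)$ off $V$ together with the field-extension invariance of the $a$-invariant, and the dimension-one base case --- are correct and are indeed the standard geometric framework behind the conjecture, but they do not close it, and the paper does not claim otherwise.
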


The following proposition and Theorem~\ref{theo:HJ} are consistent with this conjecture:

\begin{prop}
\label{prop:ramification}
Let $X$ be a smooth projective variety over $k$ and $L$ be a big and nef $\mathbb Q$-divisor on $X$. Let $f : Y \to X$ be a separable generically finite and dominant morphism from a smooth projective variety. Then we have $a(Y, f^*L) \leq a(X, L)$.
\end{prop}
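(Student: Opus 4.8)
The plan is to unwind the definition of the $a$-invariant and transport the pseudo-effectivity of $a(X,L)L + K_X$ along $f$. Since $f$ is dominant and generically finite and $L$ is big and nef, the pullback $f^*L$ is again big and nef, so $a(Y,f^*L)$ is computed by the very same formula; it therefore suffices to exhibit $a(X,L)\,f^*L + K_Y \in \overline{\mathrm{Eff}}^1(Y)$. Set $a_0 := a(X,L)$, so that by definition $a_0 L + K_X \in \overline{\mathrm{Eff}}^1(X)$.

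Next I would observe that the pullback map $f^* : N^1(X) \to N^1(Y)$ is linear and continuous and carries the class of an effective Cartier divisor to an effective class: since $Y$ is irreducible and $f$ is dominant, no component of $Y$ lies in the preimage of a proper closed subset of $X$, so the scheme-theoretic pullback of an effective divisor is an effective divisor. Hence $f^*$ sends $\mathrm{Eff}^1(X)$ into $\mathrm{Eff}^1(Y)$, and by continuity it sends the closure $\overline{\mathrm{Eff}}^1(X)$ into $\overline{\mathrm{Eff}}^1(Y)$. In particular $a_0\, f^*L + f^*K_X \in \overline{\mathrm{Eff}}^1(Y)$. Now I invoke the ramification formula: because $f$ is separable and generically finite between smooth projective varieties, there is an effective divisor $R$ on $Y$ (the ramification divisor) with $K_Y = f^*K_X + R$. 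Adding the pseudo-effective class of $R$ to $a_0\, f^*L + f^*K_X$ gives $a_0\, f^*L + K_Y \in \overline{\mathrm{Eff}}^1(Y)$, and by definition of the $a$-invariant this forces $a(Y, f^*L) \le a_0 = a(X,L)$.

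The two inputs carrying the argument are both standard, and neither requires the minimal model program. The first is that pullback preserves pseudo-effectivity; this is where dominance of $f$ enters, and it reduces to the elementary fact just noted together with continuity of $f^*$. The second, which I expect to be the genuinely essential point, is the existence of the \emph{effective} ramification divisor $R$ — this is exactly where separability cannot be dropped, since in the inseparable case $K_Y - f^*K_X$ need not be effective and the inequality can fail. Generic finiteness (as opposed to mere dominance) is used only to guarantee that $f^*L$ remains big, so that $a(Y, f^*L)$ is finite and defined by the same minimum.
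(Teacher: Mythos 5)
Your argument is correct and is essentially the paper's own proof: both hinge on the ramification formula $K_Y = f^*K_X + R$ with $R \geq 0$ (valid precisely because $f$ is separable) and on the fact that $f^*$ carries $\overline{\mathrm{Eff}}^1(X)$ into $\overline{\mathrm{Eff}}^1(Y)$, giving $a(X,L)f^*L + K_Y = f^*(a(X,L)L + K_X) + R$ pseudo-effective. Your extra remarks on why $f^*L$ stays big and nef and why pullback preserves pseudo-effectivity are fine elaborations of steps the paper leaves implicit.
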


\begin{proof}
Since $f$ is separable, we have the ramification formula:
\[
K_Y = f^*K_X + R
\]
with an effective divisor $R \geq 0$ on $Y$.
Thus 
\[
a(X, L)f^*L + K_Y = f^*(a(X, L)L + K_X) + R
\]
is pseudo-effective so that $a(Y, f^*L) \leq a(X, L)$.
\end{proof}

It turns out that Theorem~\ref{theo:HJ} and the above proposition fail in positive characteristic:

\begin{exam}[{\cite[Table 2]{KN20b} and \cite[Example 1.13]{BLRT21}}]
Assume that our ground field is $\mathbb F_3$.
Let $S'$ be the surface in $\mathbb P(1, 1, 2, 3)_{(x:y:z:w)}$ defined by
\[
w^2 + z^3 = x^2y^2(x+y)^2.
\]
This is a du Val del Pezzo surface of degree $1$ (\cite[Talbe 2]{KN20b}).
Let $S \to S'$ be the minimal resolution so that $S$ is a weak del Pezzo surface of degree $1$.

Let $\beta : \widetilde{S} \to S$ be the blow up of the base point of $|-K_S|$. Then $|-K_{\widetilde{S}}|$ defines a quasi-elliptic fibration $\pi : \widetilde{S} \to B = \mathbb P^1$, i.e., a general geometric fiber $C$ is a rational curve of arithmetic genus $1$. Then we have $a(C, -\beta^*K_S|_C) = 2 > 1 = a(S, -K_S)$, thus Theorem~\ref{theo:HJ} fails in dimension $2$ and characteristic $3$.
Let $F : B' = \mathbb P^1 \to \mathbb P^1 = B$ be the Frobenius map and we consider the base change $Y = S \times_B B'$. We denote its normalization by $\widetilde{Y} \to Y$ and consider $\rho : \widetilde{Y} \to B'$ which is a generically smooth fibration. Then we have a purely inseparable map $f : \widetilde{Y} \to S$.

Now we take the base change to $K = \mathbb F_3(t)$.
Then $B'(K)$ is Zariski dense in $B'_K$ and a general rational fiber $C_K$ of $\rho$ is isomorphic to $\mathbb P^1_K$ and we have $a(C_K, -f^*K_S) = 2$. So the asymptotic formula for the counting function of raitonal points on $C_K$ is of the form $q^{2d}$, hence Conjecture~\ref{conj:weakManin} fails in positive characteristic.

\end{exam}

\begin{exam}[{\cite{KM99}, \cite{CT18}, and \cite[Example 1.14]{BLRT21}}]
We work over $\mathbb F_2$. The following surface was found in \cite{KM99} and studied in \cite{CT18} due to its failure of Kawamata-Viehweg vanishing theorem.

Let $S$ be the blow up of $\mathbb P^2$ at all seven $\mathbb F_2$-points. Then $S$ is a weak del Pezzo surface of degree $2$. All $(-2)$-curves on $S$ are the strict transforms of $\mathbb F_2$-lines on $\mathbb P^2$. Let $S'$ be the anticanonical model of $S$. Then $S'$ comes with seven $A_1$ singularities. Moreover $|-K_S|$ defines a purely inseparable double cover $\Phi_{|-K_S|}:S \to \mathbb P^2$.

We write the equation of $S'$ in $\mathbb P(1, 1, 1, 2)_{(x, y, z, w)}$ by
\[
w^2 = f_4(x, y, z),
\]
where $f_4$ is a homogeneous polynomial of degree $4$ with coefficients in $\mathbb F_2$. We define the morphism
\[
f : \mathbb P^2 \to S', (s:t:u) \mapsto (s^2:t^2:u^2:f_4(s, t, u)).
\]
Then the Frobenius map $F : \mathbb P^2 \to \mathbb P^2$ factors through $f$.
Let $Y$ be a resolution of $\mathbb P^2$ so that the rational map $f': Y \dashrightarrow S$ is a morphism.
Then we have
\[
a(Y, -f^*K_S) = \frac{3}{2} > 1 = a(S, -K_S).
\]
Thus Proposition~\ref{prop:ramification} may fail if the morphism is not separable. 
By working over $K = \mathbb F_2(t)$, Conjecture~\ref{conj:weakManin} fails for this example as well.
\end{exam}

For a weak del Pezzo surface $S$ over $k$, we have a complete classification of inseparable covers $f: Y \to S$ such that $a(Y, -f^*K_S) > a(S, -K_S)$ (\cite[Theorem 1.12]{BLRT21}). As a byproduct we have the following theorem:

\begin{theo}[{\cite{BLRT21}}]
\label{theo:breakingmaps}
Let $S$ be a smooth weak del Pezzo surface defined over $k$. Then there exists an inseparable cover $f : Y \to S$ such that $a(Y, -f^*K_S) > a(S, -K_S)$ if and only if a general member of $|-K_S|$ is singular.
In particular such a cover does not exist when $S$ is a smooth del Pezzo surface.
\end{theo}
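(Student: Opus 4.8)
The plan is to prove the two implications separately. The forward direction (a singular general anticanonical curve produces the cover) I would establish by an explicit construction, case by case in the degree $d:=K_S^2$; the converse is the substantial direction and rests on a structural analysis of purely inseparable covers of surfaces.

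\emph{Building the cover.} A general member $C\in|-K_S|$ is an integral curve with $p_a(C)=1$, so if $C$ is singular its geometric genus is $0$ and its normalization is $\mathbb P^1$. If $d\ge 3$ this cannot occur: $-K_S$ is semiample and embeds the anticanonical model $S'$ into $\mathbb P^d$, a general member of $|-K_S|$ is the strict transform of a general hyperplane section of $S'$, and a general hyperplane section of a projective variety is smooth along its smooth locus in any characteristic (the hyperplanes tangent somewhere to $S'$ form a proper subvariety of the dual space), while a general hyperplane misses the finitely many points of $\mathrm{Sing}\,S'$; hence the hypothesis is vacuous. If $d=2$, the anticanonical system gives a degree-$2$ morphism $\Phi\colon S\to\mathbb P^2$ and a general anticanonical curve is $\Phi^*\ell$ for a general line $\ell$; were $\Phi$ separable it would be generically smooth, and for $\ell$ general (meeting the branch locus transversely at its smooth points and avoiding the images of the $(-2)$-curves) $\Phi^*\ell$ would be smooth, so $\Phi$ must be purely inseparable. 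Then the Frobenius of $\mathbb P^2$ factors as $\mathbb P^2\xrightarrow{g}S'\to\mathbb P^2$ with $g$ purely inseparable of degree $p=2$ and $g^*(-K_{S'})=F^*\mathcal O_{\mathbb P^2}(1)=\mathcal O_{\mathbb P^2}(2)$; resolving $g$ into a morphism $f\colon Y\to S$ and using the birational invariance of the $a$-invariant gives $a(Y,-f^*K_S)=a(\mathbb P^2,\mathcal O(2))=\tfrac32>1$. If $d=1$, blowing up the base point of $|-K_S|$ produces a genus-one fibration $\pi\colon\widetilde S\to\mathbb P^1$ whose general fiber is the singular curve $C$, hence $\pi$ is quasi-elliptic; the cusps of the fibers form a multisection purely inseparable of degree $p$ over $\mathbb P^1$, so base-changing along the Frobenius $\mathbb P^1\to\mathbb P^1$ and normalizing yields a surface $\widetilde Y$ with a fibration $\rho\colon\widetilde Y\to\mathbb P^1$ whose general fiber $\ell\cong\mathbb P^1$ is the normalization of $C$, together with a purely inseparable degree-$p$ morphism $f\colon\widetilde Y\to S$. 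Since $f$ carries $\ell$ birationally onto a general anticanonical curve, $-f^*K_S\cdot\ell=(-K_S)^2=1$; with $K_{\widetilde Y}\cdot\ell=-2$ and $\ell^2=0$, pseudo-effectivity of $t(-f^*K_S)+K_{\widetilde Y}$ forces $t\ge 2$, so after passing to a resolution $a(\widetilde Y,-f^*K_S)\ge 2>1$. In every case $f$ is inseparable, dominant and generically finite.

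\emph{The converse.} Suppose $f\colon Y\to S$ is an inseparable cover with $a(Y,-f^*K_S)>a(S,-K_S)=1$; since $a$ is the least $t$ with $t(-f^*K_S)+K_Y$ pseudo-effective, this says precisely that $K_Y-f^*K_S$ is not pseudo-effective. Proposition~\ref{prop:ramification} shows the $a$-invariant does not increase along the maximal separable subcover, so the increase is forced by the purely inseparable part; this part is governed by $p$-closed rank-one foliations (Ekedahl, Rudakov--Shafarevich), with a canonical bundle formula expressing $K_Y-f^*K_S$ in terms of the canonical class of the foliation plus an effective term supported on its singularities, so that non-pseudo-effectivity forces the foliation to have negative canonical class along a covering family of leaves. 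Classifying which weak del Pezzo surfaces admit such data — carried out in \cite{BLRT21}, where one must control the foliation singularities and the resulting quotient singularities — shows this happens only when $d\le 2$ with the anticanonical morphism inseparable ($d=2$), respectively the resolved anticanonical pencil quasi-elliptic ($d=1$), and in both situations the general member of $|-K_S|$ is singular. Combining the two implications with the observation that a smooth del Pezzo surface admits neither a purely inseparable anticanonical double cover nor a quasi-elliptic anticanonical pencil (either would force singularities on $S$), and hence has smooth general anticanonical curve, gives the final assertion.

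\emph{Main difficulty.} The construction in the forward direction is essentially formal once the relevant inseparable objects (the inseparable double cover; the quasi-elliptic fibration) are known to exist, which the hypothesis supplies. The hard part is the converse: the full classification of $a$-increasing inseparable covers of weak del Pezzo surfaces via $p$-closed foliations, together with the verification that each entry of that list forces the general anticanonical curve to be singular. This is \cite[Theorem 1.12]{BLRT21}, and it is where essentially all of the work lies.
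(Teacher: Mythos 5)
The survey gives no independent argument for this statement: it presents it as a byproduct of the classification of inseparable covers with larger $a$-invariant in \cite[Theorem 1.12]{BLRT21}, preceded by exactly the two constructions (the quasi-elliptic anticanonical pencil in degree $1$ and the purely inseparable anticanonical double cover in degree $2$) that you spell out for the forward direction. Your proposal takes essentially the same route — explicit constructions for the ``if'' direction, with the ``only if'' deferred to the \cite{BLRT21} classification — so it matches the paper's treatment and correctly locates the real work in that cited classification.
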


The classification of these pathological weak del Pezzo surfaces is given in \cite{KN20a} and \cite{KN20b}. The motivation of these papers is the classification of du Val del Pezzo surfaces which do not satisfy Kawamata-Viehweg vanishing or log liftability. It is interesting to see how various pathologies are related.

\subsection{Expected dimension and separability: the case of surfaces}

Let us consider these pathological weal del Pezzo surfaces $S$ such that a general member of $|-K_S|$ is singular. First of all this only happens for weak del Pezzo surfaces of degree $\leq 2$.
When $S$ has degree $1$, a general member of $|-K_S|$ is rational so we have a $1$-dimensional family of $-K_S$-lines whose dimension is higher than expected dimension. When $S$ has degree $2$, again a general member of $|-K_S|$ is rational so we have a $2$-dimensional family of $-K_S$-conics
whose dimension is higher than expected dimension. Conversely if $S$ is a weak del Pezzo surface such that a general member of $|-K_S|$ is smooth, then one can prove that all families of $-K_S$-lines and conics have expected dimension. By combining this result with inductive arguments using Bend and Break, one can prove the following theorem:

\begin{theo}[{\cite[Theorem 1.1]{BLRT21}}]
Let $S$ be a weak del Pezzo surface defined over $k$.
Then the following statements are equivalent:
\begin{itemize}
\item a general member of $|-K_S|$ is smooth;
\item all dominant components of $\overline{\mathrm{Rat}}(S)$ has expected dimension.
\end{itemize}
In particular when $S$ is a del Pezzo surface, all dominant components of $\overline{\mathrm{Rat}}(S)$ has expected dimension.
\end{theo}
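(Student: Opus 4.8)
The plan is to prove the two implications separately, after recording the following dictionary. If $M\subseteq\overline{\mathrm{Rat}}(S,\alpha)$ is a dominant component and $d:=-K_S\cdot\alpha$, then $\dim M\geq d-1$ always; moreover, writing $f^*T_S=\mathcal{O}(a_1)\oplus\mathcal{O}(a_2)$ with $a_1\geq a_2$ for a general member $f\colon\mathbb{P}^1\to S$ and noting $a_1\geq 2$ whenever $f$ is birational onto its image, one has $\dim M=d-1$ (its expected value) as soon as $a_2\geq -1$, that is, $h^1(\mathbb{P}^1,f^*T_S)=0$ and $f$ is unobstructed, whereas $\dim M\geq d$ forces $h^1(\mathbb{P}^1,f^*T_S)\geq 1$ and hence $a_2\leq -2$. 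Thus a dominant component has excess dimension precisely when its general member has $a_2\leq -2$; in that case every infinitesimal deformation of $f$ is a section of the maximal destabilizing subbundle $\mathcal{O}(a_1)\subset f^*T_S$ and the evaluation map is inseparable --- which cannot occur in characteristic zero, where a dominant component is automatically separable and hence has a free general member. One should also note that a dominant component is either birational onto its image or generically parametrizes multiple covers of a free conic, the latter having expected dimension by a direct count; so throughout we may assume the general member is birational onto its image.

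For the implication that a singular general member of $|-K_S|$ forces an excess dominant component, I would first invoke the reduction, recalled in the text, that this only occurs when $\deg S\leq 2$ (for $\deg S\geq 3$ the divisor $-K_S$ is base point free and a Bertini-type argument yields a smooth general member). When $\deg S\in\{1,2\}$ every member of $|-K_S|$ has arithmetic genus $1$, so a singular one is geometrically rational; since $|-K_S|\cong\mathbb{P}^{\deg S}$ is primitive and its members sweep out $S$, the normalizations of these curves form a dominant component of $\overline{\mathrm{Rat}}(S,-K_S)$ of dimension $\deg S$, which exceeds the expected value $\deg S-1$. This is the easy direction.

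For the converse I would induct on $d=-K_S\cdot\alpha$. In the base cases $d\in\{1,2\}$ (families of $-K_S$-lines and $-K_S$-conics, necessarily birational onto their image), suppose a dominant component $M$ had excess dimension; then $\dim M\geq d$, so at a general $f\in M$ one has $h^1(f^*T_S)\geq 1$ and hence $a_2\leq -2$. Were the image $C$ smooth, $f$ would be an isomorphism onto $C$ and $f^*T_S=\mathcal{O}(2)\oplus\mathcal{O}(d-2)$, giving $a_2=d-2\geq -1$ --- a contradiction; so $C$ is singular. As $C$ is also geometrically rational, $p_a(C)\geq 1$, whence $C^2\geq d$ by the genus formula; on the other hand $-K_S\cdot([C]+K_S)=d-K_S^2=0$ since $K_S^2=\deg S=d$, so $[C]+K_S$ lies in the negative definite lattice $K_S^{\perp}$ and $([C]+K_S)^2=C^2-d\leq 0$. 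Therefore $C^2=d$ and $[C]=-K_S$, i.e. $C\in|-K_S|$ is singular; since the image of $M$ in $|-K_S|\cong\mathbb{P}^d$ has dimension $\dim M\geq d$, the general member of $|-K_S|$ is singular, contrary to hypothesis. For the inductive step $d\geq 3$ I would pass to $\overline{M}_{0,0}(S,\alpha)$ and run the standard Bend-and-Break induction of \cite{HRS04} and \cite{BLRT20} (with a handful of intermediate degrees handled directly): if $M$ has excess dimension $e\geq 1$, then through a general point of $S$ its members move in a family of dimension $\geq d+e-2\geq 1$, so Mori's Bend and Break forces the nonempty, codimension-one boundary of $M$ to contain a component $B$ of dimension $d-2+e$, parametrizing gluings of stable maps of strictly smaller anticanonical degree (possibly splitting off a cover of a $(-2)$-curve). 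Applying the inductive hypothesis to the dominant pieces and bounding the rest by direct dimension counts with fiber products of universal curves over $S$, one finds $\dim B\leq d-2$ unless the gluing involves a non-free piece --- a multiple cover of a $-K_S$-line or of a $(-2)$-curve.

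I expect the remaining task, excluding such non-free degenerations, to be the main obstacle: it amounts to a positive-characteristic \emph{Movable Bend and Break}, namely that a general member of a dominant component of anticanonical degree $\geq 3$ specializes to a connected chain of \emph{free} rational curves, so that the boundary is assembled only from expected-dimensional pieces, forcing $\dim B\leq d-2$ and contradicting $\dim B=d-2+e$. In characteristic zero this degeneration into free curves is automatic from genericity and separability; in characteristic $p$ one must feed in the smoothness of the general anticanonical member, for instance by showing that an excess dominant component of large anticanonical degree would, through the maximal destabilizing subbundles of its general members, produce a purely inseparable cover $g\colon Y\to S$ with $a(Y,-g^*K_S)>1$, which Theorem~\ref{theo:breakingmaps} rules out precisely when a general member of $|-K_S|$ is smooth. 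The base cases and the dimension bookkeeping are routine; it is this control of inseparable degenerations and inseparable covers that carries the weight of the theorem.
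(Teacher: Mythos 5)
Your overall strategy is the same as the one sketched in the paper (and carried out in \cite{BLRT21}): the easy direction via the rational anticanonical pencil/net in degree $\leq 2$, and the converse via a base case for $-K_S$-lines and conics followed by a Bend-and-Break induction. However, as written there are two genuine gaps. First, in the base case your lattice computation asserts $-K_S\cdot([C]+K_S)=d-K_S^2=0$ ``since $K_S^2=\deg S=d$''. But $d=-K_S\cdot\alpha$ is the anticanonical degree of the curve class, not the degree of the surface, and the two need not coincide. The Hodge-index/adjunction analysis does force $C\equiv -K_S$ when $d=1$ (equality in Hodge index then also forces $K_S^2=1$), but for $d=2$ on a surface of degree $1$ it leaves open the classes $C\equiv -2K_S$ (with $p_a(C)=2$) and $C\equiv -\beta^*K_{S'}$ for a contraction $\beta\colon S\to S'$ of a $(-1)$-curve (with $p_a(C)=1$). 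A dominant excess family of $-K_S$-conics in either of these classes is not excluded by your argument, and deducing from such a family that the general member of $|-K_S|$ itself must be singular requires a further argument; handling exactly these auxiliary genus-one systems is part of the line/conic analysis in \cite{BLRT21}. (A minor point: for smooth image the splitting $f^*T_S=\mathcal O(2)\oplus\mathcal O(d-2)$ need not hold, but your conclusion $h^1(\mathbb P^1,f^*T_S)=0$, i.e.\ $a_2\geq -1$, does follow from the normal bundle sequence, so that step is repairable.)

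Second, and more seriously, the inductive step is left open by your own account. In characteristic $p$ one cannot argue that Bend-and-Break degenerations pass through chains of free curves: freeness of general members of dominant components genuinely fails for some surfaces (see the exceptions in Theorem~\ref{theo:separability}), so the characteristic-zero shortcut ``dominant $\Rightarrow$ separable $\Rightarrow$ free'' has no analogue, and your proposed derivation of a purely inseparable cover violating Theorem~\ref{theo:breakingmaps} from an excess component of large degree is only stated as a hope. Since controlling these inseparable degenerations is precisely where the content of the theorem lies --- the paper's sketch reduces everything to the expected-dimension statement for line and conic families plus the Bend-and-Break induction, with \cite{BLRT21} supplying the inseparable-cover classification that makes the induction close --- the proposal does not yet amount to a proof of the converse direction; only the easy direction is complete as written.
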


\begin{rema}
According to Theorem~\ref{theo:breakingmaps}, the above statements are also equivalent to the existence of inseparable covers $f : Y \to S$ such that $a(Y, -f^*L_S) > a(S, -K_S)$. Moreover one can show that all families of $-K_S$-lines and conics whose dimensions are higher than expected dimensions will lift to some of these inseparable covers. In the view of Theorem~\ref{theo:expecteddimension} we expect that for every dominant component $M$ of $\overline{\mathrm{Rat}}(S)$ whose dimension is higher than expected dimension,  its evaluation map $\mathrm{ev} : \mathcal C_M \to S$ from the universal family will factor through an inseparable cover $f: Y \to S$ with $a(Y, -f^*K_S) > a(S, -K_S)$ at least after taking a base change of $\pi_M: \mathcal C_M \to M$. However, such an expectation may not hold in higher dimension.
\end{rema}

Next let us focus on a del Pezzo surface $S$. Even though every dominant component of $\overline{\mathrm{Rat}}(S)$ has expected dimension, it is still possible that some of them are not separable.
\begin{exam}
Assume that $k$ has characteristic $2$.
Let $S$ be the Fermat cubic surface defined by
\[
x^3+ y^3 + z^3 + w^3 = 0.
\]
Then one can show that a general rational member of $|-K_S|$ is a cuspidal rational curve $C$. From this one can show that if we denote the normalization by $f : \mathbb P^1 \to C \hookrightarrow S$, then $f^*T_S = \mathcal O(4)\oplus \mathcal O(-1)$ so that it is not free.
\end{exam}

We completely determined which del Pezzo surfaces of degree $\geq 2$ admit such inseparable families in \cite{BLRT21}:
\begin{theo}[{\cite[Theorem 1.2]{BLRT21}}]
\label{theo:separability}
Let $S$ be a smooth del Pezzo surface of degree $d$ defined over $k$ with $\mathrm{char}(k) = p$. Assume that either $d \geq 2$, or $d= 1$ and $p\geq 11$. When $d = 3$ we assume that $S$ is not isomorphic to the following surface:
\begin{itemize}
\item the Fermat cubic surface in characteristic $2$.
\end{itemize}
When $d = 2$ we assume that $S$ is not isomorphic to the following surfaces:
\begin{itemize}
\item the double cover of $\mathbb P^2$ ramified along the Fermat quartic curve in characteristic $3$;
\item the double cover of $\mathbb P^2$ ramified along the double line in characteristic $2$.
\end{itemize}
Then every dominant component of $\overline{\mathrm{Rat}}(S)$ is separable so it generically parametrizes a free rational curve.

Moreover if $S$ has degree $\geq 2$ and $S$ is isomorphic to one of the above exceptions, then there exists an inseparable dominant component of $\overline{\mathrm{Rat}}(S)$.
\end{theo}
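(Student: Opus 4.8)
The plan is to bound the anticanonical degree of the curves one must consider by a Bend and Break induction, and then to settle the resulting short list of low-degree cases by hand using the classification and projective geometry of del Pezzo surfaces; the exceptions in the statement will turn out to be exactly the surfaces for which a Gauss-type or double-cover-type map becomes inseparable. Since $S$ is a smooth del Pezzo surface, Theorem~\ref{theo:breakingmaps} (or its proof) shows a general member of $|-K_S|$ is smooth, so by the expected-dimension theorem for weak del Pezzo surfaces quoted above every dominant component $M$ of $\overline{\mathrm{Rat}}(S)$ has the expected dimension $-K_S.C + \dim S - 3$ for $C$ a general member. The first step I would carry out is a \emph{movable} Bend and Break statement for del Pezzo surfaces valid in characteristic $p$: if the general member of a dominant component has anticanonical degree above a small explicit bound, then the component contains in its boundary a stable map whose domain is reducible and each of whose components maps onto a \emph{free} rational curve of strictly smaller degree. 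One cannot simply invoke Theorem~\ref{theo:MBB}, which is for threefolds; for surfaces the characteristic $0$ version is classical, and the positive characteristic version requires a separate argument that uses the expected dimension just established to keep the deformation theory under control. Iterating, every dominant component lies in the closure of a chain of free curves of bounded degree, so once every dominant component of small degree is known to be separable, a general smoothing of such a chain is free, and hence every dominant component is separable.

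It then remains to classify, surface by surface, the dominant components of $\overline{\mathrm{Rat}}(S)$ of small anticanonical degree and to decide their separability. On a del Pezzo surface of degree $\le 7$ the $-K_S$-lines are exactly the finitely many $(-1)$-curves and form no dominant component, and on $\mathbb P^2$, $\mathbb P^1\times\mathbb P^1$ and $\mathrm{Bl}_1\mathbb P^2$ one checks the few small-degree families directly. So the genuine work is with $-K_S$-conics, with $-K_S$-cubics when $d$ is small (e.g.\ twisted cubics and normalizations of singular hyperplane sections on a cubic surface), and in degree $1$ a handful of further low-degree families. Using the classification, each such family is either the fiber family of one of the finitely many conic bundle structures --- visibly separable, since its universal family maps isomorphically to $S$ --- or it consists of the geometrically rational members of a linear system attached to the anticanonical map or to the double cover $\Phi_{|-K_S|}$, for instance the preimages of tangent lines to the branch quartic when $d=2$. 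For families of this second type one computes the splitting type $f^*T_S\cong\mathcal O(a)\oplus\mathcal O(b)$ of a general member and shows $b\ge 0$ unless the governing Gauss map, conic projection, or purely inseparable double cover degenerates; this degeneration occurs precisely for the Fermat cubic surface in characteristic $2$, the double cover of $\mathbb P^2$ branched over the Fermat quartic in characteristic $3$, and the double cover branched over the double line in characteristic $2$, and is ruled out otherwise. In degree $1$ the anticanonical system has a base point; one passes to the (genuinely) elliptic fibration on the blow-up, and the hypothesis $p\ge 11$ is what prevents wild ramification in this fibration and in the low-degree families, so the splitting-type computations go through and no new exceptional surfaces appear.

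For the ``moreover'' part, on each excluded surface of degree $\ge 2$ one exhibits the inseparable dominant component explicitly: for the Fermat cubic in characteristic $2$ it is the two-dimensional, hence expected-dimensional, family of cuspidal hyperplane sections, whose general member has $f^*T_S\cong\mathcal O(4)\oplus\mathcal O(-1)$ as in the example above; for the two degree $2$ surfaces it is the family of preimages of tangent lines to the branch curve, whose inseparability is inherited from the inseparable Gauss map of the Fermat quartic in characteristic $3$, respectively from the purely inseparable structure of $\Phi_{|-K_S|}$ in characteristic $2$. In each case a dimension count confirms the family is a genuine dominant component.

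The crux, and the most computational part, is the base-case analysis: one must enumerate the small-degree dominant families on every smooth del Pezzo surface, realize each inside a linear system controlled by the anticanonical or double-cover map, and carry out the splitting-type computation in characteristic $p$, where the Frobenius and the (in)separability of Gauss and dual maps enter --- this is exactly where the list of exceptions and the bound $p\ge 11$ for $d=1$ are forced. Establishing movable Bend and Break on del Pezzo surfaces robustly enough in characteristic $p$ to drive the induction is the secondary difficulty.
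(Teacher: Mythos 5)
Your proposal follows essentially the same route as the paper: establish the base cases by hand for low-degree families ($-K_S$-conics and cubics, with extra care in degree $1$), where the listed exceptional surfaces arise from inseparability of the relevant Gauss/double-cover geometry, and then propagate separability to higher degrees by a Bend and Break induction valid in characteristic $p$, using the expected-dimension statement for dominant components of $\overline{\mathrm{Rat}}(S)$. This matches the proof sketch given for Theorem~\ref{theo:separability}, so no further comparison is needed.
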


The proof is based on inductive arguments again. First we show the above statement for $-K_S$-conics and cubics. Then we apply Bend and Break arguments to obtain separability for higher degree rational curves.

One can show that the above set of exceptional del Pezzo surfaces of degree $\geq 2$ exactly coincides with the set of non-$F$-split del Pezzo surfaces of degree $\geq 2$. 
Thus it would be nice to produce a direct proof of the following statement: if a smooth del Pezzo surface is $F$-split, then every dominant component of $\overline{\mathrm{Rat}}(S)$ is separable so it generically parametrizes a free rational curve. A solution to this may improve results for del Pezzo surfaces of degree $1$.

\subsection{Geometric Manin's conjecture for del Pezzo surfaces}
Finally let us mention our results on the irreducibility of moduli spaces of rational curves on del Pezzo surfaces.
Let us introduce the following function:
\[
\delta(d) = 
\begin{cases}
2 & d \geq 4\\
3 & d = 2, 3\\
11 & d = 1.
\end{cases}
\]
Here is our main theorem in \cite{BLRT21}:

\begin{theo}[{\cite[Theorem 1.5]{BLRT21}}]
Let $S$ be a smooth del Pezzo surface of degree $d$ defined over an algebraically closed field of characteristic $p$. Assume that $p \geq \delta(d)$. Furthermore when $d = 2$, we assume that $S$ is not one of exceptions listed in Theorem~\ref{theo:separability}.

Let $\alpha$ be an integral nef class on $S$ such that $-K_S.\alpha \geq 3$. Then:
\begin{itemize}
\item if $\alpha$ is not a multiple of $-K_S$-conics, then $\overline{\mathrm{Rat}}(S, \alpha)$ is irreducible and it generically parametrizes a birational stable map to a free rational curve;
\item if $\alpha$ is a multiple of a smooth rational conic, then $\overline{\mathrm{Rat}}(S, \alpha)$ is irreducible and it generically parametrizes a stable map to a smooth rational conic;
\item if $d = 2$ and $\alpha$ is a multiple of $-K_S$ or $d = 1$ and there is a contraction $\beta: S \to S'$ of a $(-1)$-curve such that $\alpha$ is a multiple of $-\beta^*K_{S'}$, then $\overline{\mathrm{Rat}}(S, \alpha)$ consists of two components: one generically parametrizes a birational stable map and the other generically parametrizes a stable map to a $-K_S$-conic, and;
\item if $d = 1$ and $\alpha$ is a multiple of $-2K_S$, then $\overline{\mathrm{Rat}}(S, \alpha)$ consists of at least two components: there is a unique component generically parametrizing a birational stable map and the others generically parametrize a stable map to a $-K_S$-conic.
\end{itemize}
\end{theo}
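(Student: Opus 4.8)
The plan is to proceed by induction on the anticanonical degree $-K_S.\alpha$, using the moduli space of stable maps $\overline{M}_{0,0}(S,\alpha)$ and Bend and Break as the inductive engine, exactly in the spirit of \cite{HRS04} and its refinement in \cite{BLRT20}. First I would dispose of the base cases. By the previous theorems in the excerpt — that a general member of $|-K_S|$ being smooth forces all dominant components of $\overline{\mathrm{Rat}}(S)$ to have expected dimension, together with Theorem~\ref{theo:separability} which guarantees (under the hypotheses $p\geq\delta(d)$ and away from the listed exceptions) that every dominant component is separable hence generically parametrizes a free rational curve — the delicate positive-characteristic pathologies are ruled out. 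So the task reduces to: (i) classify the components parametrizing $-K_S$-lines, $-K_S$-conics, and $-K_S$-cubics (the base degrees), and (ii) run the induction. For the base cases one uses the explicit geometry of del Pezzo surfaces: $-K_S$-conics move in the pencils coming from conic bundle structures (equivalently, the classes $\alpha$ with $\alpha^2=0$, $-K_S.\alpha=2$), the space of such conics in a fixed class is irreducible of the expected dimension, and similarly the space of $-K_S$-cubics is irreducible and generically free away from the exceptional surfaces. The special multiples of $-K_S$ in degrees $1$ and $2$ have to be handled by hand: when $d=2$ the anticanonical map is a double cover of $\mathbb P^2$ and $|-K_S|$ itself is a pencil of curves of arithmetic genus $1$ whose general rational member is a nodal cubic that decomposes, giving rise to the extra conic-type component; when $d=1$ the class $-2K_S$ similarly splits off conic components via the contractions $\beta: S\to S'$.

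Next I would carry out the inductive step. Suppose $\alpha$ is nef with $-K_S.\alpha$ large and $\alpha$ is not a multiple of a conic class. Let $M$ be a component of $\overline{\mathrm{Rat}}(S,\alpha)$; by the expected-dimension and separability results $M$ is a dominant component generically parametrizing a free curve, so $\dim M = -K_S.\alpha - 1$. The strategy is to degenerate a general member of $M$, inside $\overline{M}_{0,0}(S)$, to a stable map with reducible domain $Z = Z_1\cup Z_2$ whose two components $C_1,C_2$ are themselves free rational curves with $[C_1]+[C_2]=\alpha$ and $-K_S.C_i < -K_S.\alpha$. This is the surface analogue of Movable Bend and Break; on a del Pezzo surface it follows from Mori's Bend and Break applied in a suitable fibration (or from a direct count using $\dim M$ and the fact that curves through two general points degenerate), and one must arrange that $[C_1],[C_2]$ are again nef and not forced to be conic multiples — this is where the combinatorics of $\Eff_1(S)$ and the geometry of the $(-1)$- and $(-2)$-curves enters. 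Once the decomposition exists, the gluing/smoothing map $\overline{M}_{0,0}(S,\beta_1)\times_S \overline{M}_{0,0}(S,\beta_2)\to \overline{M}_{0,0}(S,\alpha)$, together with the inductive irreducibility of the two factors and the irreducibility of a general fiber of the evaluation map (the del Pezzo analogue of Theorem~\ref{theo:a-covers}: a general fiber of $\mathrm{ev}$ is irreducible unless $M$ parametrizes conics), shows that every component of $\overline{\mathrm{Rat}}(S,\alpha)$ meets the locus of such reducible maps; since all these reducible maps deform within a single irreducible family (the image of the gluing map from a connected product), all components coincide. The multiples-of-a-conic case is genuinely different and is handled separately: a class $\alpha = m\beta$ with $\beta$ a conic class corresponds to $m$-fold multiple covers of the fibers of a conic bundle, and irreducibility of $\overline{\mathrm{Rat}}(S,m\beta)$ reduces to irreducibility of the relevant Hurwitz-type space together with irreducibility of the base of the conic bundle, which is $\mathbb P^1$.

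The main obstacle I expect is establishing the correct Movable Bend and Break statement on del Pezzo surfaces with sharp degree bound $-K_S.\alpha\geq 3$, and — more subtly — ensuring the inductive bookkeeping of classes stays inside the nef cone and correctly tracks the "exceptional" classes (multiples of $-K_S$ in degrees $1,2$, and the pullbacks $-\beta^*K_{S'}$ from contractions of $(-1)$-curves in degree $1$) so that the statement about exactly how many components appear is precise rather than merely an irreducibility statement away from a bad locus. Keeping this bookkeeping correct in characteristic $p$ requires, at each degeneration step, re-invoking separability (Theorem~\ref{theo:separability}) for the smaller-degree classes, which is why the hypothesis $p\geq\delta(d)$ and the explicit exclusion of the non-$F$-split surfaces of degree $2$ propagate all the way through; the degree $1$ case is the tightest because $\delta(1)=11$ reflects the need to avoid wild ramification in the anticanonical elliptic pencil, and this is the part of the argument that is least robust.
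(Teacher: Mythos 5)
Your overall inductive framework is reasonable, but at the crucial step your plan quietly assumes characteristic-zero technology that is not available here, and this is a genuine gap rather than a matter of detail. Your claim that every component meets the locus of reducible maps and that all such maps deform inside a single irreducible family rests on two unproved inputs: a Movable Bend and Break statement for del Pezzo surfaces in characteristic $p$ with the sharp bound $-K_S.\alpha\geq 3$, and, more seriously, a del Pezzo analogue of Theorem~\ref{theo:a-covers} asserting that a general fiber of $\mathrm{ev}:\mathcal C_M\to S$ is irreducible away from conic families. The proof of the latter in \cite{BLRT20} goes through the classification of $a$-covers and uses separability of the Stein factorization of the evaluation map; in characteristic $p$ this factorization may be purely inseparable, and inseparable $a$-covers are exactly the pathology discussed in Section~\ref{sec:GMCcharp} (Theorem~\ref{theo:breakingmaps} rules out inseparable covers with larger $a$-invariant on a del Pezzo surface, but it does not by itself give the monodromy or irreducibility of $\mathrm{ev}$-fibers that your gluing argument needs). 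Without that statement your smoothing step shows at best that each component contains reducible curves of a prescribed type, not that all components coincide. (There are also small factual slips: for $d=2$ the anticanonical system is a net, not a pencil, and its rational members are not nodal cubics; the extra components for $\alpha$ a multiple of $-K_S$, resp.\ of $-\beta^*K_{S'}$, are multiple-cover components over $-K_S$-conics.)

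The paper's actual proof takes a different route precisely to avoid these issues. One sets $e=-K_S.\alpha$, fixes $e-2$ general points, and proves by induction on $e$ that the locus of stable maps in $\overline{M}^{\mathrm{bir}}(S,\alpha)$ passing through these points is $1$-dimensional and contained in the smooth locus of $\overline{M}_{0,0}(S)$; this is where the expected-dimension and separability results (Theorem~\ref{theo:separability}) and the hypothesis $p\geq\delta(d)$ enter. Connectedness of this locus is then obtained not by a Bend-and-Break/gluing argument in characteristic $p$, but by lifting $S$ to characteristic $0$, invoking Testa's irreducibility theorems (\cite{Testa05}, \cite{Testa09}) there, and specializing back to characteristic $p$; connectedness plus containment in the smooth locus then yields irreducibility of $\overline{M}^{\mathrm{bir}}(S,\alpha)$, and the component count in the conic and multiple-cover cases is handled separately. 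If you wish to salvage a purely characteristic-$p$ argument along your lines, the missing ingredient you would have to supply is exactly the irreducibility of general $\mathrm{ev}$-fibers (or an equivalent monodromy statement) in characteristic $p$, which is not delivered by the results you cite.
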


The proof of this theorem is based on inductive arguments as well as lifting arguments to characteristic $0$. More precisely we denote the union of components of $\overline{\mathrm{Rat}}(S, \alpha)$ generically parametrizing birational stable maps by $\overline{M}^{\mathrm{bir}}(S, \alpha)$. Let $-K_S.\alpha = e \geq 3$. We fix $e-2$ general points and we prove that the loci parametrizing stable maps in $\overline{M}^{\mathrm{bir}}(S, \alpha)$ passing through these $e-2$ general points are $1$-dimensional and contained in the smooth locus of $\overline{M}_{0,0}(S)$ using the inductive proof on $e$. Then we lift $S$ to characteristic $0$ and prove that these loci in characteristic $0$ are connected by appealing to the irreducibility of the space of rational curves on del Pezzo surfaces in characteristic $0$ proved by Testa (\cite{Testa05} and \cite{Testa09}). Then using a specialization argument we prove that these loci in characteristic $p$ are connected as well. Since this locus is connected and contained in the smooth locus of the entire moduli space, we conclude that $\overline{M}^{\mathrm{bir}}(S, \alpha)$ is irreducible.

\nocite{*}
\bibliographystyle{alpha}
\bibliography{absurvey}

\def\cprime{$'$}
\begin{thebibliography}{BCHM10}

\bibitem[Bat88]{Bat88}
V.~V. Batyrev.
\newblock Distribution of rational points of bounded height.
\newblock a lecture at Math. Inst. Berlin, Thu 21th Jul, 1988.

\bibitem[BCHM10]{BCHM}
C.~Birkar, P.~Cascini, Chr.~D. Hacon, and J.~M\textsuperscript{c}Kernan.
\newblock Existence of minimal models for varieties of log general type.
\newblock {\em J. Amer. Math. Soc.}, 23(2):405--468, 2010.

\bibitem[BDPP13]{BDPP}
S.~Boucksom, J.~P. Demailly, M.~Paun, and T.~Peternell.
\newblock The pseudo-effective cone of a compact {K}\"ahler manifold and
  varieties of negative {K}odaira dimension.
\newblock {\em J. Algebraic Geom.}, 22(2):201--248, 2013.

\bibitem[BHB20]{BHB20}
T.~D. Browning and D.~R. Heath-Brown.
\newblock Density of rational points on a quadric bundle in {$\Bbb{P}^3\times
  \Bbb{P}^3$}.
\newblock {\em Duke Math. J.}, 169(16):3099--3165, 2020.

\bibitem[Bir21]{birkar16b}
C.~Birkar.
\newblock Singularities of linear systems and boundedness of {F}ano varieties.
\newblock {\em Ann. of Math. (2)}, 193(2):347--405, 2021.

\bibitem[BK13]{BK13}
R.~Beheshti and N.~M. Kumar.
\newblock Spaces of rational curves on complete intersections.
\newblock {\em Compos. Math.}, 149(6):1041--1060, 2013.

\bibitem[BLRT20]{BLRT20}
R.~Beheshti, B.~Lehmann, E.~Riedl, and S.~Tanimoto.
\newblock Moduli spaces of rational curves on {F}ano threefolds.
\newblock arXiv:2010.06795, 2020.

\bibitem[BLRT21]{BLRT21}
R.~Beheshti, B.~Lehmann, E.~Riedl, and S.~Tanimoto.
\newblock Rational curves on del {P}ezzo surfaces in positive characteristic.
\newblock arXiv:2110.00596, 2021.

\bibitem[BM90]{BM}
V.~V. Batyrev and Yu.~I. Manin.
\newblock Sur le nombre des points rationnels de hauteur born\'e des
  vari\'et\'es alg\'ebriques.
\newblock {\em Math. Ann.}, 286(1-3):27--43, 1990.

\bibitem[Bou12]{Bou12}
D.~Bourqui.
\newblock Moduli spaces of curves and {C}ox rings.
\newblock {\em Michigan Math. J.}, 61(3):593--613, 2012.

\bibitem[Bou16]{Bou16}
D.~Bourqui.
\newblock Algebraic points, non-anticanonical heights and the {S}everi problem
  on toric varieties.
\newblock {\em Proc. Lond. Math. Soc. (3)}, 113(4):474--514, 2016.

\bibitem[BS20]{BS20}
T.~D. Browning and W.~Sawin.
\newblock A geometric version of the circle method.
\newblock {\em Ann. of Math. (2)}, 191(3):893--948, 2020.

\bibitem[BT96]{BT-cubic}
V.~V. Batyrev and Y.~Tschinkel.
\newblock Rational points on some {F}ano cubic bundles.
\newblock {\em C. R. Acad. Sci. Paris S\'er. I Math.}, 323(1):41--46, 1996.

\bibitem[BT98]{BT}
V.~V. Batyrev and Y.~Tschinkel.
\newblock Tamagawa numbers of polarized algebraic varieties.
\newblock {\em Ast\'erisque}, (251):299--340, 1998.
\newblock Nombre et r\'epartition de points de hauteur born\'ee (Paris, 1996).

\bibitem[BV17]{BV17}
T.~D. Browning and P.~Vishe.
\newblock Rational curves on smooth hypersurfaces of low degree.
\newblock {\em Algebra Number Theory}, 11(7):1657--1675, 2017.

\bibitem[Cas04]{Cast04}
A.-M. Castravet.
\newblock Rational families of vector bundles on curves.
\newblock {\em Internat. J. Math.}, 15(1):13--45, 2004.

\bibitem[CJS00]{CJS98}
R.~L. Cohen, J.~D.~S. Jones, and G.~B. Segal.
\newblock Stability for holomorphic spheres and {M}orse theory.
\newblock In {\em Geometry and topology: {A}arhus (1998)}, volume 258 of {\em
  Contemp. Math.}, pages 87--106. Amer. Math. Soc., Providence, RI, 2000.

\bibitem[CPPZ20]{CPPZ20}
I.~Cheltsov, J.~Park, Y.~Prokhorov, and M.~Zaidenberg.
\newblock Cylinders in {F}ano varieties.
\newblock {\em EMS surveys in Mathematical Sciences}, 2020.
\newblock to appear, arXiv:2007.14207.

\bibitem[CT18]{CT18}
P.~Cascini and H.~Tanaka.
\newblock Smooth rational surfaces violating {K}awamata-{V}iehweg vanishing.
\newblock {\em Eur. J. Math.}, 4(1):162--176, 2018.

\bibitem[Das20]{Das20}
O.~Das.
\newblock Finiteness of log minimal models and nef curves on 3-folds in
  characteristic {$p>5$}.
\newblock {\em Nagoya Math. J.}, 239:76--109, 2020.

\bibitem[Deb01]{Debarre}
O.~Debarre.
\newblock {\em Higher-dimensional algebraic geometry}.
\newblock Universitext. Springer-Verlag, New York, 2001.

\bibitem[EV05]{EV05}
J.~S. Ellenberg and A.~Venkatesh.
\newblock Counting extensions of function fields with bounded discriminant and
  specified {G}alois group.
\newblock In {\em Geometric methods in algebra and number theory}, volume 235
  of {\em Progr. Math.}, pages 151--168. Birkh\"{a}user Boston, Boston, MA,
  2005.

\bibitem[FMT89]{FMT89}
J.~Franke, Yu.~I. Manin, and Y.~Tschinkel.
\newblock Rational points of bounded height on {F}ano varieties.
\newblock {\em Invent. Math.}, 95(2):421--435, 1989.

\bibitem[FP97]{FP97}
W.~Fulton and R.~Pandharipande.
\newblock Notes on stable maps and quantum cohomology.
\newblock In {\em Algebraic geometry---{S}anta {C}ruz 1995}, volume~62 of {\em
  Proc. Sympos. Pure Math.}, pages 45--96. Amer. Math. Soc., Providence, RI,
  1997.

\bibitem[Gue95]{Guest}
M.~A. Guest.
\newblock The topology of the space of rational curves on a toric variety.
\newblock {\em Acta Math.}, 174(1):119--145, 1995.

\bibitem[HJ17]{HJ16}
Chr.~D. Hacon and C.~Jiang.
\newblock On {F}ujita invariants of subvarieties of a uniruled variety.
\newblock {\em Algebr. Geom.}, 4(3):304--310, 2017.

\bibitem[HRS04]{HRS04}
J.~Harris, M.~Roth, and J.~Starr.
\newblock Rational curves on hypersurfaces of low degree.
\newblock {\em J. Reine Angew. Math.}, 571:73--106, 2004.

\bibitem[HTT15]{HTT15}
B.~Hassett, S.~Tanimoto, and Y.~Tschinkel.
\newblock Balanced line bundles and equivariant compactifications of
  homogeneous spaces.
\newblock {\em Int. Math. Res. Not. IMRN}, (15):6375--6410, 2015.

\bibitem[Isk77]{IskovI}
V.~A. Iskovskih.
\newblock Fano threefolds. {I}.
\newblock {\em Izv. Akad. Nauk SSSR Ser. Mat.}, 41(3):516--562, 717, 1977.

\bibitem[Isk78]{IskovII}
V.~A. Iskovskih.
\newblock Fano threefolds. {II}.
\newblock {\em Izv. Akad. Nauk SSSR Ser. Mat.}, 42(3):506--549, 1978.

\bibitem[KM98]{KM98}
J.~Koll\'{a}r and Sh. Mori.
\newblock {\em Birational geometry of algebraic varieties}, volume 134 of {\em
  Cambridge Tracts in Mathematics}.
\newblock Cambridge University Press, Cambridge, 1998.
\newblock With the collaboration of C. H. Clemens and A. Corti, Translated from
  the 1998 Japanese original.

\bibitem[KM99]{KM99}
S.~Keel and J.~McKernan.
\newblock Rational curves on quasi-projective surfaces.
\newblock {\em Mem. Amer. Math. Soc.}, 140(669):viii+153, 1999.

\bibitem[KN20a]{KN20b}
T.~Kawakami and M.~Nagaoka.
\newblock Classification of {D}u {V}al del pezzo surfaces of {P}icard rank one
  in positive characteristic.
\newblock arXiv:2012.09405, 2020.

\bibitem[KN20b]{KN20a}
T.~Kawakami and M.~Nagaoka.
\newblock Pathologies and liftability on {D}u {V}al del pezzo surfaces in
  positive characteristic.
\newblock arXiv:2008.07700, 2020.

\bibitem[Kol96]{Kollar}
J.~Koll\'{a}r.
\newblock {\em Rational curves on algebraic varieties}, volume~32 of {\em
  Ergebnisse der Mathematik und ihrer Grenzgebiete. 3. Folge. A Series of
  Modern Surveys in Mathematics [Results in Mathematics and Related Areas. 3rd
  Series. A Series of Modern Surveys in Mathematics]}.
\newblock Springer-Verlag, Berlin, 1996.

\bibitem[KP01]{KP01}
B.~Kim and R.~Pandharipande.
\newblock The connectedness of the moduli space of maps to homogeneous spaces.
\newblock In {\em Symplectic geometry and mirror symmetry ({S}eoul, 2000)},
  pages 187--201. World Sci. Publ., River Edge, NJ, 2001.

\bibitem[LR19]{LeRu19}
C.~Le~Rudulier.
\newblock Points alg\'{e}briques de hauteur born\'{e}e sur une surface.
\newblock {\em Bull. Soc. Math. France}, 147(4):705--748, 2019.

\bibitem[LST18]{LST18}
B.~Lehmann, A.~K. Sengupta, and S.~Tanimoto.
\newblock Geometric consistency of {M}anin's {C}onjecture.
\newblock arXiv:1805.10580, 2018.

\bibitem[LT17]{LTDuke}
B.~Lehmann and S.~Tanimoto.
\newblock On the geometry of thin exceptional sets in {M}anin's conjecture.
\newblock {\em Duke Math. J.}, 166(15):2815--2869, 2017.

\bibitem[LT19a]{LT19}
B.~Lehmann and S.~Tanimoto.
\newblock Classifying sections of del {P}ezzo fibrations, {I}.
\newblock arXiv:1912.04369, 2019.

\bibitem[LT19b]{LTGMC}
B.~Lehmann and S.~Tanimoto.
\newblock Geometric {M}anin's conjecture and rational curves.
\newblock {\em Compos. Math.}, 155(5):833--862, 2019.

\bibitem[LT21a]{LTGT}
B.~Lehmann and S.~Tanimoto.
\newblock Classifying sections of del {P}ezzo fibrations, {II}.
\newblock {\em Geom. Topol.}, 2021.
\newblock to appear, arXiv:2107.04723.

\bibitem[LT21b]{LTJAG}
B.~Lehmann and S.~Tanimoto.
\newblock Rational curves on prime {F}ano threefolds of index 1.
\newblock {\em J. Algebraic Geom.}, 30(1):151--188, 2021.

\bibitem[LTT18]{LTT14}
B.~Lehmann, S.~Tanimoto, and Y.~Tschinkel.
\newblock Balanced line bundles on {F}ano varieties.
\newblock {\em {\it J. Reine Angew. Math.}}, 743:91--131, 2018.

\bibitem[MM03]{MM03}
Sh. Mori and Sh. Mukai.
\newblock Erratum: ``{C}lassification of {F}ano 3-folds with {$B_2\geq 2$}''
  [{M}anuscripta {M}ath. {\bf 36} (1981/82), no. 2, 147--162; {MR}0641971
  (83f:14032)].
\newblock {\em Manuscripta Math.}, 110(3):407, 2003.

\bibitem[MM82]{MM81}
Sh. Mori and Sh. Mukai.
\newblock Classification of {F}ano {$3$}-folds with {$B_{2}\geq 2$}.
\newblock {\em Manuscripta Math.}, 36(2):147--162, 1981/82.

\bibitem[MTiB20]{MB20}
Y.~Mustopa and M.~Teixidor~i Bigas.
\newblock Rational curves on moduli spaces of vector bundles.
\newblock arXiv:2007.10511, 2020.

\bibitem[Pey95]{Peyre}
E.~Peyre.
\newblock Hauteurs et mesures de {T}amagawa sur les vari\'et\'es de {F}ano.
\newblock {\em Duke Math. J.}, 79(1):101--218, 1995.

\bibitem[Pey03]{Peyre03}
E.~Peyre.
\newblock Points de hauteur born\'ee, topologie ad\'elique et mesures de
  {T}amagawa.
\newblock {\em J. Th\'eor. Nombres Bordeaux}, 15(1):319--349, 2003.

\bibitem[Pey17]{Peyre17}
E.~Peyre.
\newblock Libert\'{e} et accumulation.
\newblock {\em Doc. Math.}, 22:1615--1659, 2017.

\bibitem[RY19]{RY19}
E.~Riedl and D.~Yang.
\newblock Kontsevich spaces of rational curves on {F}ano hypersurfaces.
\newblock {\em J. Reine Angew. Math.}, 748:207--225, 2019.

\bibitem[ST21]{ST21}
N.~Shimizu and S.~Tanimoto.
\newblock The spaces of rational curves on del {P}ezzo threefolds of degree
  one.
\newblock arXiv:2101.04312, 2021.

\bibitem[Sta00]{Starr00}
J.~Starr.
\newblock {\em Rational curves on hypersurfaces in projective n-space}.
\newblock ProQuest LLC, Ann Arbor, MI, 2000.
\newblock Thesis (Ph.D.)--Harvard University.

\bibitem[Tes05]{Testa05}
D.~Testa.
\newblock {\em The {S}everi problem for rational curves on del {P}ezzo
  surfaces}.
\newblock ProQuest LLC, Ann Arbor, MI, 2005.
\newblock Thesis (Ph.D.)--Massachusetts Institute of Technology.

\bibitem[Tes09]{Testa09}
D.~Testa.
\newblock The irreducibility of the spaces of rational curves on del {P}ezzo
  surfaces.
\newblock {\em J. Algebraic Geom.}, 18(1):37--61, 2009.

\bibitem[Tho98]{Thom98}
J.~F. Thomsen.
\newblock Irreducibility of {$\overline{M}_{0,n}(G/P,\beta)$}.
\newblock {\em Internat. J. Math.}, 9(3):367--376, 1998.

\end{thebibliography}

\end{document}